\renewcommand{\section}{%
\@startsection{section}{1}{\z@}
{0.5truecm plus -1ex minus -.2ex}%
{1.0ex plus .2ex}{\bfseries\large}}
\def\@seccntformat#1{\csname the#1\endcsname.\ }
\numberwithin{equation}{section} 
\newtheorem{thm}{Theorem}[section]
\newtheorem{lem}[thm]{Lemma}
\theoremstyle{definition}
\newcommand{\ep}{\varepsilon}
\newcommand{\pa}{\partial}
\newcommand{\Rone}{\mathbb{R}}
\newcommand{\utilde}{\widetilde{u}}
\newcommand{\vtilde}{\widetilde{v}}
\newcommand{\ol}[1]{\overline{#1}}
\newcommand{\ul}[1]{\underline{#1}}
\newcommand{\tmax}{T_{\rm max}}
\newcommand{\lp}[2]{\Vert{#2}\Vert_{L^{#1}(\Omega)}}
\newcommand{\Lone}{\mathcal{L}_1}
\newcommand{\Ltwo}{\mathcal{L}_2}
\definecolor{blue}{HTML}{0020A1}
\newcommand{\nn}{\nonumber}
\title{\mbox{On the weakly competitive case in a two-species chemotaxis model}}
\author{Tobias Black\thanks{Universit\"at Paderborn, Institut f\"ur Mathematik, Warburger Str. 100, 33098 Paderborn, Germany; tblack@math.uni-paderborn.de}\quad and Johannes Lankeit\thanks{Universit\"at Paderborn, Institut f\"ur Mathematik, Warburger Str. 100, 33098 Paderborn, Germany; jlankeit@math.uni-paderborn.de}\quad and Masaaki Mizukami\thanks{Department of Mathematics, Tokyo University of Science; 
masaaki.mizukami.math@gmail.com}}
\date{}
\begin{document}
\maketitle 
\begin{abstract}
\noindent{
 In this article we investigate a parabolic-parabolic-elliptic two-species chemotaxis system with weak competition and show global asymptotic stability of the coexistence steady state under a smallness condition on the chemotactic strengths, which seems more natural than the condition previously known. \\
 For the proof we rely on the method of eventual comparison, which thereby is shown to be a useful tool even in the presence of chemotactic terms.\\
 \textbf{Keywords: }chemotaxis; two-species; logistic source; Lotka-Volterra competition\\
    \textbf{Mathematics Subject Classification (2010):}\/ 
    Primary: 35B40; Secondary: 35B51, 92C17, 92D40, 35A01, 35B35.\\
    
}
\end{abstract}


\section{Introduction}
The questions how different populations living in the same habitat interact with each other and with their surroundings are central to mathematical biology. 

The competition for resources by two species (in contrast to, e.g., one being prey to the other) is often modeled by means of 
Lotka-Volterra type competition terms, i.e. with coupling coefficients $a_1>0$, $a_2>0$ in 
\[
 u_t=u(1-u-a_1v), \qquad v_t=v(1-v-a_2u). 
\]
We refer to \cite{goh_twospecies} for conditions for global stability of fixed points in such systems: If $a_1>0$ and $a_2>0$, for positive equilibria to be globally stable it is sufficient that they be locally stable.

Of course, spatial homogeneity is a highly idealized situation, and dependence of the population size 
on a spatial variable together with the effects of random motion of individuals has been incorporated into the model (see e.g. \cite[Chapter 1.2]{murrayII} and references therein).
%
%

One way of even simplest lifeforms to react to their environment
is chemotaxis, that is the tendency to move in the direction of higher concentrations of a signal substance.
%
%
Inter alia, effects of chemotaxis on possible population size have been considered in \cite{lauffenburger_aris_keller}. 

Exploitation of chemotaxis for biotechnological purposes in mind and envisioning applications in e.g. agriculture (like nitrogen fixation or denitrification), or in mammalian intestinal microbial ecology, in \cite{lauffenburger_rivero_kelly_ford_dirienzo} the authors compare species that undergo growth with different rates and diffusive versus diffusive plus chemotactic motion. 
They conclude: \textit{``Thus, chemotaxis can, when the response is sufficiently strong, overcome both disadvantages of inferior growth kinetics and random motility. [...] At any rate, these results suggest that chemotactic responses might provide a useful means for controlling population dynamics in nonmixed systems. In particular, they provide a way to permit slowly growing populations to coexist with or outcompete faster growing species. Such a situation might be highly desirable in many environmental or biotechnological applications.''} 
For the effectiveness of nutrient-taxis as advantageous dispersal strategy for populations in heterogeneous environments see e.g. \cite{lou_tao_win,cantrell_cosner_lou_advectionmediated}.
%

Chemotaxis terms in combination with several populations also appear in the context of the host-parasite interaction modelled and analyzed in \cite{pearce_etal} and \cite{tang_tao}, the food-chain model of \cite{liu_ou}, 
or the system in \cite{bendahmane_langlais} that deals with the spread of an epidemic disease. 

For one single species, chemotaxis is described by the 
celebrated model 
by Keller and Segel 
\begin{align}\label{KS}
u_t&= \Delta u - \nabla\cdot (u\nabla v),& \qquad 
\tau v_t&= \Delta v - v +u,
\end{align}
which has been treated intensively over the last decades. We refer to the surveys \cite{hillen_painter_09,horstmann_03,BBTW}.

Incorporating growth terms into this model, that is, adding $\kappa u-\mu u^2$ to the first equation in \eqref{KS} (studied in \cite{tello_winkler_logsource,winkler_08,winkler_parabolic_logsource,lankeit_evsmoothness}), gives rise to colorful dynamics as witnessed in e.g. \cite{painter_hillen_physicaD}, emphasized by attractor results in \cite{kuto_osaki_sakurai_tsujikawa_12,nakaguchi_efendiev_08,aida_tsujikawa_efendiev_yagi_mimura_06,nakaguchi_osaki_13} or illustrated by recent results on transient growth phenomena in \cite{winkler_ctexceed}, \cite{lankeit_exceed}.

One of the most straightforward generalizations of this model to the situation of several species is to consider two species (or two subpopulations of one species) that react to the same signalling substance they both produce, as occurring in the differentiation of cell-types during slime mold formation in populations of \textit{Dictyostelium discoideum}, cf. e.g. \cite{matsukuma_durston,vasiev_weijer}.

The pure two-species chemotaxis model without growth or competition effects has been introduced in \cite{wolansky} and in particular blow-up of solutions in finite time, 
 known to occur in the single-species situation,  
 has been investigated also for the several species model (see e.g. \cite{horstmann_generalizingKS}), both as to the question of occurrence of blow-up versus global existence (see \cite{biler_espejo_guerra,biler_guerra,espejo_vilches_conca,kurokiba,dickstein,lili})
 and of qualitative features of the former, for example whether it occurs simultaneously (\cite{espejo_stevens_velazquez_simultaneous,espejo_stevens_suzuki}) in both species, or nonsimultaneously (\cite{espejo_stevens_velazquez_nonsimultaneous}); for numerical observations pertaining to these models also confer \cite{kurganov_lukacovamedvidova}. 
Furthermore, the long term behaviour of globally existent solutions emanating from small initial data (\cite{zhang_li,yan}) or for chemosensitivity functions capturing saturation effects (\cite{negreanu_tello_ppe}) has been investigated.

 A combination of the two mechanisms introduced so far (Lotka-Volterra type competition and Keller-Segel type chemotaxis) is examined in the two-species chemotaxis-competition model 
 
\begin{equation}\label{cp}
  \begin{cases}
    u_t=d_1\Delta u - \chi_1\nabla \cdot (u\nabla w)
    +\mu_1 u(1-u-a_1v), 
    & x\in\Omega,\ t>0, 
\\[1mm]
    v_t=d_2\Delta v - \chi_2\nabla \cdot (v\nabla w)
    +\mu_2 v(1-a_2u-v), 
    & x\in\Omega,\ t>0,  
\\[1mm]	
    \tau w_t=d_3\Delta w + \alpha u + \beta v - \gamma w, 
    & x\in\Omega,\ t>0, 
\\[1mm]
    \nabla u\cdot \nu=\nabla v\cdot \nu = \nabla w\cdot \nu = 0, 
    & x\in\pa \Omega,\ t>0,
\\[1mm]
    u(x,0)=u_0(x),\; v(x,0)=v_0(x),\; w(x,0)=w_0(x), 
    & x\in\Omega,
  \end{cases} 
\end{equation}
where $u$ and $v$ denote the population densities of two species undergoing chemotaxis in reaction to the signal having concentration $w$, posed in a bounded smooth domain $\Omega\subset\Rone ^n$. Herein, the diffusion rates of species and signal are given by $d_1, d_2, d_3>0$, whereas $\mu_i>0, \chi_i\geq 0, a_i\geq 0$ ($i\in\{1,2\}$) are used to denote the strengths of chemotaxis, growth kinetics and competition for each species, whereas the size of $\alpha>0$, $\beta>0$ and $\gamma>0$ regulates the production of the signal by the first and second species and its decay, respectively.  
This model has first been considered in \cite{tello_winkler} for $\tau =0$, where it was shown for the weakly competitive case, i.e. 
 \begin{equation}\label{cond:ai}
  a_1, a_2\in[0,1),
 \end{equation}
that solutions to \eqref{cp} exist and converge to the coexistence steady state 
\[(u_*,v_*,w_*)=
\left(\frac{1-a_1}{1-a_1a_2},\frac{1-a_2}{1-a_1a_2},\frac{2-a_1-a_2}{1-a_1a_2}\right),\]
provided that 
\begin{equation}\label{strangecondition}
 2(\chi _1+\chi _2)+a_1\mu _2 <\mu _1 \quad \text{and}\quad 2(\chi _1+\chi _2)+a_2\mu _1<\mu _2.
\end{equation}
Result and proof of \cite{tello_winkler} have successfully been extended to a setting of even more species in \cite{wang_li}, the condition being analogous to \eqref{strangecondition}.

This condition seems quite unnatural, because it is not automatically satisfied in the absence of chemotaxis ($\chi _1=\chi _2=0$) and it is the goal of the present paper to replace this condition by a smallness condition on $\frac{\chi _i}{\mu _i}$, $i\in\{1,2\}$ alone and to thus remove the additional condition on $a_1$, $a_2$, $\mu _1$, $\mu _2$ implicitly posed by \eqref{strangecondition}:

\begin{thm}\label{mainth}
Let $n\geq 1$ and $\Omega\subset \Rone ^n$ be a bounded domain with smooth boundary. Let $\tau=0$ and $d_1,d_2,d_3,\alpha, \beta,\gamma \in(0,\infty)$. 
Let $a_1$, $a_2$ fulfil \eqref{cond:ai} and let $\chi _1\ge 0$, $\chi _2\ge 0$ and $\mu_1,\mu_2>0$ be such that $q_1:=\frac{\chi _1}{\mu _1}$, $q_2:=\frac{\chi _2}{\mu _2}$ 
satisfy the conditions:
\begin{align}\label{conditionqi}
  &q_1\in 
  \big[0,\tfrac{d_3}{2\alpha}\big)\cap
  \big[0,\tfrac{a_1d_3}{\beta}\big), 
\quad
  q_2\in 
  \big[0,\tfrac{d_3}{2\beta}\big)\cap
  \big[0,\tfrac{a_2d_3}{\alpha}\big),
\\\label{conditionqi2}
  &a_1a_2d_3^2<(d_3-2\alpha q_1)(d_3-2\beta q_2).
\end{align}
Then the following holds: 
\begin{enumerate}
\item[{\rm (i)}] 
For all nonnegative functions $u_0$, $v_0
\in C(\ol{\Omega})$ satisfying 
$u_0\not\equiv 0$, $v_0\not\equiv 0$, 
there exists a unique 
global-in-time classical solution $(u,v,w)\in\left(C^0(\ol{\Omega}\times[0,\infty))\cap C^{2,1}(\ol{\Omega}\times(0,\infty))\right)^3$ 
of \eqref{cp} such that 
$u>0$, $v>0$ and $w>0$ in $\ol{\Omega}\times(0,\infty)$. 
\item[{\rm (ii)}]
The unique global solution $(u,v,w)$ of \eqref{cp} 
has the following asymptotic behaviour\/{\rm :}
\begin{align*}
  u(t)\to u^{\ast}, 
\quad 
  v(t)\to v^{\ast}, 
\quad
  w(t)\to \frac{\alpha u^\ast+\beta v^\ast}{\gamma}
\quad
  \mbox{as}\ t\to\infty, 
\end{align*}
uniformly in $\Omega$, where 
\begin{align*}
u^{\ast}&=\frac{1-a_1}{1-a_1a_2},\quad \text{and}\quad v^{\ast}=\frac{1-a_2}{1-a_1a_2}.
\end{align*}
\end{enumerate}
\end{thm}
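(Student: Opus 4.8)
The plan is to separate the proof into the local existence/positivity part (i) and the long-time behaviour part (ii), treating the latter by the \emph{method of eventual comparison}. For (i), since $\tau=0$ the third equation is elliptic and $w$ can be represented in terms of $u$ and $v$; standard parabolic fixed-point theory (as in \cite{tello_winkler}) yields a local classical solution on a maximal interval $[0,\tmax)$, and the positivity $u,v,w>0$ for $t>0$ follows from the strong maximum principle and the Hopf lemma once we note $u_0\not\equiv0$, $v_0\not\equiv0$. Global existence ($\tmax=\infty$) should follow from an a~priori $L^\infty$ bound: first one shows $u,v$ are bounded in $L^1(\Omega)$ via the logistic dampening, then elliptic regularity for the $w$-equation gives control of $\nabla w$, and a Moser-type iteration (or the known results for the $\tau=0$ logistic Keller--Segel system) upgrades this to a uniform bound. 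I expect this part to be essentially routine and would cite the relevant literature rather than reproving it.

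The heart of the matter is (ii). The idea is to build sub- and supersolutions to the parabolic system out of the \emph{constant} steady states of associated scalar ODEs, exploiting that $u$ and $v$ satisfy, after using $w$ expressed via the elliptic equation, scalar logistic-type inequalities perturbed by the chemotactic drift. Concretely, I would first derive pointwise bounds of the form $w(x,t)\le \frac{\alpha\,\ol u(t)+\beta\,\ol v(t)}{\gamma}$-type estimates (and reverse ones) using the elliptic comparison principle for $-d_3\Delta w+\gamma w=\alpha u+\beta v$, so that $\nabla w$ enters the $u$- and $v$-equations in a controlled way. The conditions \eqref{conditionqi} on $q_i=\chi_i/\mu_i$ are exactly what is needed so that the chemotactic contribution $-\chi_i\nabla\cdot(u\nabla w)$, when compared against the logistic term $\mu_i u(1-u-a_1 v)$, can be absorbed: rewriting $-\chi_i\nabla\cdot(u\nabla w)=-\chi_i\nabla u\cdot\nabla w-\chi_i u\Delta w$ and substituting $d_3\Delta w=\gamma w-\alpha u-\beta v$, the ``bad'' $u\Delta w$ term becomes a lower-order algebraic perturbation whose size is governed by $\frac{\chi_i}{d_3}(\alpha,\beta,\gamma)$, i.e. by $q_i$ up to the factor $\mu_i$. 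This lets one set up, for suitable constants, spatially homogeneous super/subsolutions and conclude via a comparison argument that
\begin{align*}
\limsup_{t\to\infty}\max_{\ol\Omega} u(\cdot,t)\le \ol U_1,\qquad
\liminf_{t\to\infty}\min_{\ol\Omega} u(\cdot,t)\ge \ul U_1,
\end{align*}
and analogously for $v$, where $\ol U_1,\ul U_1,\ol V_1,\ul V_1$ solve a coupled system of algebraic (in)equalities.

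Then comes the iteration: feeding the bounds on $v$ back into the $u$-comparison (and vice versa) produces improved bounds $\ol U_{k+1},\ul U_{k+1},\ol V_{k+1},\ul V_{k+1}$, and one shows these monotone sequences converge. The fixed point of this iteration is forced to be the coexistence state $(u^\ast,v^\ast)$ precisely when the determinant-type condition \eqref{conditionqi2}, $a_1a_2d_3^2<(d_3-2\alpha q_1)(d_3-2\beta q_2)$, holds: this is the contractivity condition guaranteeing that the linearized iteration map on $(\ol U-\ul U,\ol V-\ul V)$ is a strict contraction, so the gap closes. Since the limits of $\max u$ and $\min u$ coincide (and likewise for $v$), one obtains uniform convergence $u(\cdot,t)\to u^\ast$, $v(\cdot,t)\to v^\ast$; the convergence $w(\cdot,t)\to\frac{\alpha u^\ast+\beta v^\ast}{\gamma}$ then follows from elliptic regularity applied to the $w$-equation. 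The main obstacle, and the genuinely new point compared with the chemotaxis-free Lotka--Volterra comparison theory, is making the eventual comparison argument work \emph{in the presence of the drift term}: one must choose the comparison functions and handle the $\nabla w$-contribution so that the logistic term still dominates, which is exactly where the smallness of $q_i$ is used and why \eqref{conditionqi}--\eqref{conditionqi2} replace the earlier, less natural hypothesis \eqref{strangecondition}. I would also need to be careful that the comparison principle is applicable despite the cross-diffusion structure — likely by comparing each scalar equation separately after freezing the other component within its already-established asymptotic bounds.
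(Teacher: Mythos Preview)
Your proposal is correct and follows essentially the same strategy as the paper, with two organisational differences worth noting.

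For (i), you propose an $L^1$ bound followed by elliptic regularity for $\nabla w$ and a Moser iteration. The paper avoids all of this: the very substitution you describe for (ii) --- writing $-\chi_1\nabla\cdot(u\nabla w)=\Lone u-\frac{\chi_1}{d_3}u(\gamma w-\alpha u-\beta v)$ --- already yields a scalar parabolic inequality $u_t\le \Lone u+\mu_1 u\big(1-(1-\tfrac{\alpha q_1}{d_3})u\big)$, and a single ODE comparison gives the uniform $L^\infty$ bound directly. So the same trick handles both global existence and the asymptotics; no Moser step is needed.

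For (ii), the paper does not run an iteration producing sequences $\ol U_k,\ul U_k$. Instead it defines once and for all
\[
L_1=\limsup_{t\to\infty}\max_{\ol\Omega}u,\quad l_1=\liminf_{t\to\infty}\min_{\ol\Omega}u,\quad L_2,\ l_2\ \text{analogously},
\]
and by a single eventual comparison (with $\varepsilon$-perturbed ODEs on $(T_\varepsilon,\infty)$, letting $\varepsilon\to 0$) obtains the closed algebraic system
\[
(d_3-\alpha q_1)L_1\le d_3-\alpha q_1 l_1-a_1 d_3 l_2,\qquad (d_3-\alpha q_1)l_1\ge d_3-\alpha q_1 L_1-a_1 d_3 L_2,
\]
and the symmetric pair for $L_2,l_2$. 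Subtracting gives $(d_3-2\alpha q_1)(L_1-l_1)\le a_1 d_3(L_2-l_2)$ and its analogue, and \eqref{conditionqi2} forces $L_i=l_i$. This is the same contraction you identify, just applied once rather than iteratively.

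One point you should not gloss over: the upper bounds initially come with a positive part, $L_1\le \frac{(d_3-\alpha q_1 l_1-a_1 d_3 l_2)_+}{d_3-\alpha q_1}$, and before subtracting one must exclude the case where this vanishes (equivalently, in your iteration, show the sequences stay away from the extinction states). The paper handles this by a separate contradiction argument using $a_1,a_2<1$ and \eqref{conditionqi}; your sketch does not address it, and without it the ``fixed point is forced to be $(u^\ast,v^\ast)$'' claim is incomplete.
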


We want to emphasize that \eqref{conditionqi} can indeed be viewed as 
a smallness condition on the relative chemotactic 
strength only and, 
in particular, 
satisfied in 
the chemotaxis-free case ($\chi _1=\chi _2=0$). \\

The case of partially strong competition ($a_1>1>a_2>0$) was considered in \cite{stinner_tello_winkler}. It was shown that solutions exist globally and satisfy $(u(t),v(t))\to  (0,1)$ if $q_1\leq a_1$, $q_2<\frac12$, $\alpha q_1+\max\{q_2,\frac{a_2-a_2q_2}{1-2q_2},\frac{\alpha q_2-a_2q_2}{1-2q_2}\}<1$.

In the fully parabolic system (\eqref{cp} with $\tau =1$), it was proven in 
\cite{bai_winkler} that for sufficiently large values of $\mu _1$, $\mu _2$ global classical solutions converge to the unique positive homogeneous equilibrium  exponentially for $a_1>1>a_2>0$ (and with an algebraic rate if $a_1=1$ and $\mu _2$ is large), and moreover that there are global bounded classical solutions for $n\leq2$ even if the parameters of the system are merely positive.

Furthermore, in spatially one-dimensional domains more insight into qualitative behaviour of the system has been obtained in \cite{wang_yang_zhang} and \cite{wang_zhang_yang_hu}, where global existence of solutions was shown as well as existence of nonconstant steady states by bifurcation analysis. The stability of the bifurcating solutions is investigated there, too, and a time-periodic solution has been found. 
Additionally, the findings have also been illustrated by numerical experiments (\cite[Sec. 4]{wang_zhang_yang_hu}).

In \cite{zhang_li_globalboundedness_twospecies}, for different sensitivity functions satisfying $\chi _i(w)\leq\frac{K_i}{(1+\alpha _i w)^{k_i}}$ for some $k_i>1$, $i\in\{1,2\}$, 
global bounded classical solutions were proven to exist under the condition of $\chi , \mu , K$ being sufficiently small (where the meaning of 'sufficiently small' depends on the initial mass).  

In the competition-free case ($a_1=a_2=0$) for sensitivity functions generalizing $\frac{\chi _{i,0}}{(1+w)^k}$ ($k>1$), global existence and boundedness of solutions were obtained, together with a result on asymptotic stability of steady states. (\cite{masaaki_tomomi}) 

A system where the chemoattractant is not a signal substance produced by the population itself
but a nutrient which is consumed, (but which is otherwise similar), has been treated in \cite{zhangzhenbu,wang_wu}.


Let us finally mention that also (parabolic-elliptic) Keller-Segel type systems of two species and two chemicals have been studied where the signal for each species is produced by the other (\cite{taowin_twospecies_twochemicals}).
 
A parabolic-parabolic-ODE model with connections to chemotaxis-haptotaxis models (see e.g. \cite{chaplain_anderson}) has been treated in \cite{negreanu_tello_ppo}. 
 
Cross-diffusive effects like those added to the present system by means of the chemotaxis term pose a serious threat to any monotonicity properties one would like to employ  and usually should render comparison arguments useless. 
 Nevertheless, in some situations closely related, comparison methods were employable, for example the proof of the result in \cite{tello_winkler} relies on comparison with solutions to a system of four coupled ODEs. (In fact, a system closely related to \eqref{cp} was considered as example in \cite{negreanu_tello_comparison}, where comparison with solutions of ODE systems has been developed more systematically. However, there the essential coupling arising from the appearance of $u$ and $v$ as source terms for the third equation was not included.)
 
Most successfully, comparison arguments were utilized in deriving the global asymptotic stability of 
in \eqref{cp} for the case of strong competition in \cite{stinner_tello_winkler}.  

The situation considered there is, in a certain sense, easier than the present case, since the limit of one component being $0$ makes lower estimates for this component unnecessary and simplifies the system of inequalities that has to be dealt with during the proof. 

Nevertheless this method of 'eventual comparison', as we would like to call it, 
turns out to be a powerful tool also in the present context and we will use it to derive the desired result. 

More precisely, we shall proceed as follows: At first we will establish global existence and boundedness of the solutions to \eqref{cp}. 
 Then, knowing that limes inferior and limes superior of each component exist and are not infinite, we find differential inequalities that are eventually (that is, on time intervals of the form $(T,\infty )$ for some $T>0$) satisfied and that are accessible to comparison arguments, the corresponding ODE solution converging (almost) regardless of its initial value. This will give us enough information to deduce the precise value of limites superior and inferior and to prove convergence of the solution to the coexistence steady state.

%
%
%
%
%
\section{Global existence}
%
%
%
\begin{lem}\label{lemlocal}
Let $n\geq 1$ and $\Omega\subset\Rone^n$ be a bounded domain with smooth boundary, let $\chi_1,\chi_2,a_1,a_2\geq 0$, $d_1, d_2, d_3, \alpha, \beta, \gamma, \mu_1, \mu_2\in(0,\infty)$.
Suppose that $u_0$, $v_0\in C(\ol{\Omega})$ are 
nonnegative such that 
$u_0\not\equiv 0$, $v_0\not\equiv 0$. 
Then there exist $\tmax\in(0,\infty]$ and 
a unique classical solution $(u,v,w)$ of \eqref{cp} on $\Omega\times[0,\tmax)$ which belongs to 
$\left(C(\ol{\Omega}\times[0,\tmax))\cap 
C^{2,1}(\ol{\Omega}\times(0,\tmax))\right)^3$, such that moreover the following extensibility criterion holds:
\begin{align*}
  \text{Either\ }\tmax=\infty 
\quad 
  \text{or }
\quad 
  \limsup_{t\nearrow \tmax}
  \big(\lp{\infty}{u(\cdot,t)}+\lp{\infty}{v(\cdot,t)}\big)=\infty.
\end{align*}
Furthermore, $u$, $v$ and $w$ are positive in $\ol{\Omega}\times(0,\tmax)$.
\end{lem}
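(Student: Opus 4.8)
Since $\tau=0$, the third equation is elliptic and determines $w$ uniquely from $(u,v)$: for nonnegative $\varphi,\psi\in C(\ol\Omega)$ let $\mathcal{W}(\varphi,\psi)$ be the solution of $-d_3\Delta w+\gamma w=\alpha\varphi+\beta\psi$ in $\Omega$ with $\pa_\nu w=0$ on $\pa\Omega$; standard elliptic regularity gives $\mathcal{W}(\varphi,\psi)\in W^{2,q}(\Omega)$ with $\|\nabla\mathcal{W}(\varphi,\psi)\|_{L^q(\Omega)}\le C_q(\lp{q}{\varphi}+\lp{q}{\psi})$ for every $q\in(1,\infty)$, and the dependence on $(\varphi,\psi)$ is affine. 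The plan is to regard \eqref{cp} as a (nonlocal) two–component parabolic problem for $(u,v)$ with $w=\mathcal{W}(u,v)$ inserted, and to build a local solution by a contraction argument based on the variation–of–constants representation and the smoothing properties of the Neumann heat semigroup $(e^{td_i\Delta})_{t\ge0}$ on $C(\ol\Omega)$.

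Fix $q>n$ and set $R:=2(\lp{\infty}{u_0}+\lp{\infty}{v_0})+1$. On $S:=\{(\utilde,\vtilde)\in C([0,T];C(\ol\Omega))^2:\ \lp{\infty}{\utilde(t)},\lp{\infty}{\vtilde(t)}\le R\ \text{for}\ t\in[0,T]\}$ define $\Phi(\utilde,\vtilde)=(u,v)$ by $u(t)=e^{td_1\Delta}u_0-\chi_1\int_0^te^{(t-s)d_1\Delta}\nabla\!\cdot\!\big(\utilde(s)\nabla\mathcal{W}(\utilde(s),\vtilde(s))\big)\,ds+\mu_1\int_0^te^{(t-s)d_1\Delta}\utilde(s)\big(1-\utilde(s)-a_1\vtilde(s)\big)\,ds$, and analogously for $v$. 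Using the estimate $\|e^{\sigma d_1\Delta}\nabla\!\cdot\!\phi\|_{L^\infty(\Omega)}\le C\big(1+\sigma^{-\frac12-\frac{n}{2q}}\big)\lp{q}{\phi}$ together with the elliptic bound above (so $\|\utilde\nabla\mathcal{W}(\utilde,\vtilde)\|_{L^q(\Omega)}\le C\,R^2$), and observing that $\tfrac12+\tfrac{n}{2q}<1$ since $q>n$, all time integrals are finite and tend to $0$ as $T\to0$; hence $\Phi$ maps $S$ into itself for $T$ small. As the nonlinearities are locally Lipschitz on bounded sets (the logistic terms are quadratic and $\mathcal{W}$ is affine), the same estimates make $\Phi$ a contraction on $S$ after possibly shrinking $T$, so Banach's fixed point theorem yields a mild solution $(u,v)$ on $[0,T]$, with $w:=\mathcal{W}(u,v)$; uniqueness on $[0,T]$ is part of the fixed point statement.

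Given a bounded mild solution, a bootstrap using parabolic $L^p$– and Schauder estimates for the linear equations solved by $u$ and $v$ (with coefficients made sufficiently regular by another application of elliptic regularity to $w$) upgrades $(u,v,w)$ to a classical solution in $\left(C(\ol\Omega\times[0,T])\cap C^{2,1}(\ol\Omega\times(0,T])\right)^3$. Passing to a maximal interval $[0,\tmax)$ and proving the extensibility alternative is then routine: the length of the existence interval produced above depends only on $\lp{\infty}{u(\cdot,t_0)}+\lp{\infty}{v(\cdot,t_0)}$ and on the fixed data, so $\tmax<\infty$ together with $\limsup_{t\nearrow\tmax}(\lp{\infty}{u}+\lp{\infty}{v})<\infty$ would permit continuation past $\tmax$. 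Global uniqueness on $[0,\tmax)$ follows by covering with small intervals. Positivity is obtained from maximum principles: on $\ol\Omega\times[0,T']$ with $T'<\tmax$ the function $u$ solves $u_t=d_1\Delta u-\chi_1\nabla w\cdot\nabla u+c\,u$ with $c=-\chi_1\Delta w+\mu_1(1-u-a_1v)$ bounded, hence $u_0\ge0$, $u_0\not\equiv0$ force $u>0$ in $\ol\Omega\times(0,\tmax)$ by the strong maximum principle, likewise $v>0$, and then $-d_3\Delta w+\gamma w=\alpha u+\beta v>0$ gives $w>0$ by the elliptic maximum principle.

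The main obstacle is the cross–diffusion term $\nabla\!\cdot\!(u\nabla w)$: it rules out a naive Picard iteration in $C([0,T];C(\ol\Omega))$ and forces the use of the semigroup smoothing estimate, with the exponent $q>n$ chosen precisely so that the kernel singularity $\sigma^{-\frac12-\frac{n}{2q}}$ remains time–integrable. The remaining ingredients — elliptic regularity for $w$, the parabolic bootstrap, the continuation argument and the maximum–principle–based positivity — are standard.
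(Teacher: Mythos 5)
Your proposal is correct and follows the standard route: the paper gives no details of its own here, delegating the proof entirely to Lemma 2.1 of the cited work of Stinner, Tello and Winkler, which proceeds along essentially the same lines (Banach fixed point via the Neumann heat semigroup with the smoothing estimate for $e^{\sigma d_1\Delta}\nabla\cdot$, elliptic regularity for $w$, parabolic bootstrap, continuation, and maximum principles for positivity). Your reconstruction supplies all the key ingredients, including the correct choice $q>n$ to keep the singularity $\sigma^{-\frac12-\frac{n}{2q}}$ integrable and the observation that $\Delta w$ is controlled through the elliptic equation so that the zeroth-order coefficient in the positivity argument is bounded.
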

\begin{proof}
The proof is the same as \cite[Lemma 2.1]{stinner_tello_winkler}. 
\end{proof}
In the following lemma we infer boundedness of the solutions by a comparison argument and hence, in accordance with the above extensibility criterion, global existence. 
In its proof and also later on, given $d_1>0$, $d_2>0$ and $w$, we denote by $\Lone $ and $\Ltwo $ the operators defined by
\begin{align}
  \Lone \utilde:=d_1\Delta \utilde-\chi_1\nabla \utilde\cdot \nabla w,
\qquad
  \Ltwo \vtilde:=d_2\Delta \vtilde-\chi_2\nabla \vtilde\cdot \nabla w. \label{defdiffop}
\end{align}
%
%
%
%
\begin{lem}\label{lemglobal}
Suppose that the assumptions of Theorem \ref{mainth} are satisfied and that $\tmax$, $u$, $v$, $w$ are as given by Lemma \ref{lemlocal}.
Then $\tmax=\infty$ and both $u$ and $v$ are bounded in 
$\Omega\times(0,\infty)$.
\end{lem}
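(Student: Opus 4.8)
\emph{Proof strategy.} The plan is to construct explicit constant supersolutions for $u$ and $v$ and to conclude by a comparison principle; the conditions \eqref{conditionqi} are precisely what is needed to absorb the cross-terms that the chemotactic drift produces once the (elliptic) signal equation is used to eliminate $\Delta w$.

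First I would rewrite the $u$-equation of \eqref{cp}: expanding $\nabla\cdot(u\nabla w)=\nabla u\cdot\nabla w+u\Delta w$ and substituting $\Delta w=\tfrac1{d_3}(\gamma w-\alpha u-\beta v)$ from the third equation, and using $\chi_1=\mu_1 q_1$, one obtains
\begin{align*}
  u_t=\Lone u+\mu_1 u\Big[1-\tfrac{\gamma q_1}{d_3}\,w+\big(\tfrac{\alpha q_1}{d_3}-1\big)u+\big(\tfrac{\beta q_1}{d_3}-a_1\big)v\Big]
  \qquad\text{in }\Omega\times(0,\tmax).
\end{align*}
By Lemma~\ref{lemlocal} we have $u,v,w>0$ here; by \eqref{conditionqi} we have $\tfrac{\beta q_1}{d_3}-a_1<0$ (since $q_1<\tfrac{a_1d_3}{\beta}$), and of course $\tfrac{\gamma q_1}{d_3}\ge0$, so the last two bracket terms are $\le 0$, while $q_1<\tfrac{d_3}{2\alpha}$ forces $c_1:=1-\tfrac{\alpha q_1}{d_3}>\tfrac12>0$. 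Therefore $u$ is a subsolution of $z_t=\Lone z+\mu_1 z(1-c_1z)$ with homogeneous Neumann data, i.e. $u_t\le \Lone u+\mu_1 u(1-c_1 u)$ on $\Omega\times(0,\tmax)$.

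Next I would check that the constant $\ol u:=\max\{\lp{\infty}{u_0},\tfrac1{c_1}\}$ is a supersolution of the same problem: $\pa_t\ol u-\Lone\ol u-\mu_1\ol u(1-c_1\ol u)=-\mu_1\ol u(1-c_1\ol u)\ge0$ because $c_1\ol u\ge1$, while $\ol u\ge u_0$ in $\ol\Omega$ and $\pa_\nu\ol u=0$. The parabolic comparison principle then yields $u\le\ol u$ on $\ol\Omega\times[0,\tmax)$. Repeating the computation for the $v$-equation with $q_2<\tfrac{d_3}{2\beta}$ and $q_2<\tfrac{a_2d_3}{\alpha}$ gives $v_t\le\Ltwo v+\mu_2 v(1-c_2 v)$ with $c_2:=1-\tfrac{\beta q_2}{d_3}>\tfrac12$, hence $v\le\ol v:=\max\{\lp{\infty}{v_0},\tfrac1{c_2}\}$ on $\ol\Omega\times[0,\tmax)$. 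As $\lp{\infty}{u(\cdot,t)}+\lp{\infty}{v(\cdot,t)}\le\ol u+\ol v$ then cannot blow up, the extensibility criterion of Lemma~\ref{lemlocal} forces $\tmax=\infty$, and the two bounds are exactly the asserted boundedness.

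The step I expect to require the most care is the justification of the comparison principle itself: $\Lone$ and $\Ltwo$ contain the drift $\nabla w$, so one must first know that $\nabla w$ is bounded — in fact Hölder continuous — on every strip $\ol\Omega\times[0,T]$ with $T<\tmax$, which follows from elliptic regularity for the third equation of \eqref{cp} once $u,v$ are known to be continuous on $\ol\Omega\times[0,T]$; only then are the operators uniformly parabolic with admissible coefficients and the scalar comparison lemma applicable on $[0,T]$, for each such $T$. Everything else is essentially the algebraic observation that \eqref{conditionqi} kills the bad sign-indefinite terms; note that \eqref{conditionqi2} plays no role here and will be needed only for the asymptotic analysis.
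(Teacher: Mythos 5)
Your proposal is correct and follows essentially the same route as the paper: substitute $\Delta w=\tfrac1{d_3}(\gamma w-\alpha u-\beta v)$ from the elliptic equation into the chemotaxis term, use \eqref{conditionqi} and positivity of $v,w$ to discard the sign-definite terms, and compare $u$ (resp.\ $v$) with a supersolution of the resulting logistic inequality before invoking the extensibility criterion. The only cosmetic difference is that you use a constant supersolution $\max\{\lp{\infty}{u_0},1/c_1\}$ where the paper uses the spatially homogeneous solution of the logistic ODE (which additionally records $\limsup_{t\nearrow\tmax}u\le d_3/(d_3-\alpha q_1)$, though that extra information is not needed for this lemma).
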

%
\begin{proof}
Making use of \eqref{defdiffop}, from the first and third equation of \eqref{cp} 
we obtain 
\begin{align*}
  u_t&=\Lone u - \chi_1 u \frac{\gamma w-\alpha u-\beta v}{d_3}
  + \mu_1 u(1-u-a_1v)
\\\notag
     &=\Lone u 
     + \mu_1 u\left(1-\Big(1-\frac{\alpha q_1}{d_3}\Big)u
     -\Big(a_1-\frac{\beta q_1}{d_3}\Big)v
     -\frac{\gamma q_1}{d_3} w\right),
\end{align*}
wherein the choice of $q_1$ then implies
\begin{align*}
u_t     &\leq \Lone u
     + \mu_1u
     \left(1-\Big(1-\frac{\alpha q_1}{d_3}\Big)u\right).
\end{align*}
We choose $\ol{u}\in(0,\infty)$ 
such that $\lp{\infty}{u_0}\leq \ol{u}$, 
and denote by $y_1:[0,\infty)\to\Rone$ 
the function solving 
\begin{align*}
\begin{cases}
  y_1'=\mu_1y_1\left(1-
  \big(1-\frac{\alpha q_1}{d_3}\big)y_1\right),
\\
  y_1(0)=\ol{u} 
\end{cases}
\end{align*}
which satisfies
\begin{align*}
  y_1(t)\to \frac{d_3}{d_3-\alpha q_1} 
\quad 
  \mbox{as}\ t\to\infty.
\end{align*}
By a comparison theorem we obtain 
\begin{align*}
  \limsup_{t\nearrow \tmax}u(t)\leq 
  \limsup_{t\nearrow \tmax}y_1(t)=\frac{d_3}{d_3-\alpha q_1}
\end{align*}
Treating the second equation of \eqref{cp} 
in a similar fashion we get
\begin{align*}
  v_t\leq \Ltwo v 
  +\mu_2v\left(1
  -\Big(1-\frac{\beta q_2}{d_3}\Big)v\right),
\end{align*}
and analogously conclude 
\begin{align*}
  \limsup_{t\nearrow \tmax} v(t)\leq \frac{d_3}{d_3-\beta q_2}.
\end{align*}
By the extensibility criterion we obtain $\tmax=\infty$. 
\end{proof}
%
%
%
\section{Global asymptotic stability}
%
%
%
%
%
%
\noindent 
Since Lemma \ref{lemglobal} guarantees that $u$ and $v$ exist globally and are bounded and nonnegative, it is possible to 
define nonnegative finite real numbers $L_1$, $l_1$, $L_2$, $l_2$ by
\begin{align}
  L_1:=&\limsup_{t\to \infty}
       \left(\max_{x\in\ol{\Omega}}u(x,t)\right),
\qquad
  l_1:=\liminf_{t\to \infty}
       \left(\min_{x\in\ol{\Omega}}u(x,t)\right),
\nn
\\
  L_2:=&\limsup_{t\to \infty}
       \left(\max_{x\in\ol{\Omega}}v(x,t)\right),
\qquad
  l_2:=\liminf_{t\to \infty}
       \left(\min_{x\in\ol{\Omega}}v(x,t)\right).
\label{defLl}
\end{align}
From the definition we have that 
for all $\ep>0$ there exists $T_\ep>0$ such that
\begin{align}\label{estiu}
  l_1-\ep<u(x,t)<L_1+\ep,
\qquad
  l_2-\ep<v(x,t)<L_2+\ep
\end{align}
hold for all $t>T_\ep$ and all $x\in \Omega$. 
By the maximum principle applied to 
\begin{align*}
\begin{cases}
  -d_3\Delta w + \gamma w = \alpha u + \beta v & \mbox{in} \ \Omega,
\\
  \nabla w\cdot \nu=0 & \mbox{on} \ \pa\Omega, 
\end{cases}
\end{align*}
we have
\begin{align*}
  \min_{\xi\in\ol{\Omega}}(\alpha u(\xi,t)+\beta v(\xi,t))
  \leq
  \gamma  w(x,t)
  \leq
  \max_{\xi\in\ol{\Omega}}(\alpha u(\xi,t)+\beta v(\xi,t)) 
\quad
  \text{for all }\ t>0,\ x\in\Omega .
\end{align*}
Consequently, we obtain from \eqref{estiu} that
for all $\ep>0$ there exists $T_\ep>0$ such that 
\begin{align}\label{estiw}
  \alpha l_1+\beta l_2-2\ep < \gamma  w(x,t)<\alpha L_1+\beta L_2+2\ep\qquad \text{ for all } t>T_\ep \text{ and for all } x\in\Omega. 
\end{align}
%
%

%
%
%
%
Employing comparison arguments on ultimate time intervals, we derive first estimates for the quantities defined in \eqref{defLl}. 
\begin{lem}\label{lemestiL1}
Under the assumptions of Theorem \ref{mainth} and with \eqref{defLl}, the following inequalities hold:
\begin{align}\label{estiL1l1}
  L_1\leq 
  \dfrac{\left(
  d_3-\alpha q_1 l_1-a_1d_3l_2
  \right)_+}{d_3-\alpha q_1}
\quad 
  {\rm and}
\quad
  l_1\geq \dfrac{d_3-\alpha q_1 L_1-a_1d_3L_2}{d_3-\alpha q_1}.
\end{align}
\end{lem}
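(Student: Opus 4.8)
The plan is to mimic the strategy already used in Lemma~\ref{lemglobal}, but now exploiting the two-sided information contained in \eqref{estiu} and \eqref{estiw} rather than only crude global bounds. Starting from the first equation of \eqref{cp} and substituting $\Delta w = \frac{1}{d_3}(\gamma w - \alpha u - \beta v)$, we rewrite
\begin{align*}
  u_t = \Lone u + \mu_1 u\left(1 - \Big(1-\tfrac{\alpha q_1}{d_3}\Big)u - \Big(a_1 - \tfrac{\beta q_1}{d_3}\Big)v - \tfrac{\gamma q_1}{d_3}w\right).
\end{align*}
To obtain the upper estimate for $L_1$, I would fix $\ep>0$, pick $T_\ep$ as in \eqref{estiu}--\eqref{estiw}, and bound the reaction term from above on $(T_\ep,\infty)$: since $q_1 \le \frac{a_1 d_3}{\beta}$ by \eqref{conditionqi}, the coefficient $a_1 - \frac{\beta q_1}{d_3}$ is nonnegative, so $-(a_1-\frac{\beta q_1}{d_3})v \le -(a_1-\frac{\beta q_1}{d_3})(l_2-\ep)$; likewise $-\frac{\gamma q_1}{d_3}w \le -\frac{q_1}{d_3}(\alpha l_1 + \beta l_2 - 2\ep)$ using \eqref{estiw}. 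Collecting these and combining the two $l_2$-contributions, $u$ becomes a subsolution of the scalar ODE $y' = \mu_1 y\big(1 - \frac{\alpha q_1}{d_3} - (\text{const}_\ep) - (1-\frac{\alpha q_1}{d_3})y\big)$ on $(T_\ep,\infty)$, whose positive equilibrium I can compute explicitly. Applying the comparison principle for the operator $\Lone$ (which, like $\Ltwo$, enjoys a maximum principle because it is a linear second-order elliptic operator with the drift $-\chi_1\nabla w$ and no zeroth-order term) and letting $t\to\infty$ and then $\ep\to 0$ yields $L_1 \le \frac{d_3 - \alpha q_1 l_1 - a_1 d_3 l_2}{d_3-\alpha q_1}$; taking the positive part is automatic since $L_1\ge 0$.

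For the lower estimate on $l_1$ I would argue symmetrically, bounding the reaction term from below on $(T_\ep,\infty)$: now $-(a_1-\frac{\beta q_1}{d_3})v \ge -(a_1-\frac{\beta q_1}{d_3})(L_2+\ep)$ and $-\frac{\gamma q_1}{d_3}w \ge -\frac{q_1}{d_3}(\alpha L_1 + \beta L_2 + 2\ep)$, so $u$ is a supersolution of $y' = \mu_1 y\big(1 - \tfrac{\alpha q_1}{d_3} - (\text{const}'_\ep) - (1-\tfrac{\alpha q_1}{d_3})y\big)$. Here one must be slightly careful: the comparison argument from below needs the solution to stay away from zero, which is exactly what Lemma~\ref{lemlocal} provides (positivity of $u$ on $\ol\Omega\times(0,\infty)$), and the relevant ODE, started from a positive value below $\min_{\ol\Omega}u(\cdot,T_\ep)$, converges to its positive equilibrium $\frac{d_3-\alpha q_1 L_1 - a_1 d_3 L_2}{d_3-\alpha q_1}$ provided that equilibrium is positive; if it is not, the claimed inequality $l_1\ge\frac{d_3-\alpha q_1 L_1-a_1 d_3 L_2}{d_3-\alpha q_1}$ holds trivially because its right-hand side is $\le 0\le l_1$. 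Passing to the limit $t\to\infty$, then $\ep\to0$, finishes this half.

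The main obstacle, and the point that deserves the most care, is the justification of the comparison principle in the presence of the chemotactic drift: one needs that $w$ (hence $\nabla w$) is smooth enough on $\ol\Omega\times[T_\ep,\infty)$ for the operators $\Lone,\Ltwo$ to generate bona fide parabolic comparison, which follows from the regularity asserted in Lemma~\ref{lemlocal} together with the global boundedness from Lemma~\ref{lemglobal} and standard parabolic/elliptic estimates applied to the $w$-equation. A secondary technical point is the bookkeeping of the $\ep$-dependent constants: one should make sure the spatially constant bounds are uniform in $x$ (which they are, since $l_i,L_i$ and the bounds in \eqref{estiu}, \eqref{estiw} are constants) so that the scalar ODE comparison functions genuinely dominate/are dominated by $u$ pointwise on all of $\Omega\times(T_\ep,\infty)$. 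Once these are in place the remaining manipulations are the elementary algebra of locating the equilibrium of a logistic ODE and letting $\ep\downarrow 0$.
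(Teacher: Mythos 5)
Your proposal is correct and follows essentially the same route as the paper: rewrite the $u$-equation via the elliptic $w$-equation, use \eqref{estiu}--\eqref{estiw} on $(T_\ep,\infty)$ together with $a_1-\frac{\beta q_1}{d_3}\ge 0$ from \eqref{conditionqi}, and compare with the logistic ODE from above and below before letting $\ep\downarrow 0$. The only imprecision is that the ODE comparison directly yields $L_1\le\frac{(d_3-\alpha q_1 l_1-a_1d_3l_2+O(\ep))_+}{d_3-\alpha q_1}$ (the ODE limit is the positive part of the equilibrium, not the equilibrium itself when it is negative), which is exactly the claimed bound, so the positive part should be carried through the comparison rather than appended afterwards.
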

%
\begin{proof}
Recalling that from the first and third equation 
of \eqref{cp} and using the same notation as in \eqref{defdiffop} we have
\begin{align}\label{eq:ueq_tobereplacedforv}
u_t-\Lone u&= \mu_1 u\left(1-\Big(1-\frac{\alpha q_1}{d_3}\Big)u
     -\Big(a_1-\frac{\beta q_1}{d_3}\Big)v
     -\frac{\gamma q_1}{d_3} w\right),
\end{align}
we let $\ep>0$ and make use of \eqref{estiu} and \eqref{estiw} 
to find $T_\ep>0$ such that 
\begin{align*}
  u_t-\Lone u&\leq\mu_1 u\left(1-\Big(1-\frac{\alpha q_1}{d_3}\Big)u
     -\Big(a_1-\frac{\beta q_1}{d_3}\Big)(l_2-\ep)
     -\frac{q_1}{d_3} (\alpha l_1+\beta l_2-2\ep)\right) 
\\
  &= \mu_1u\left(1-\Big(1-\frac{\alpha q_1}{d_3}\Big)u-\frac{\alpha q_1}{d_3}l_1
  -a_1l_2
  +\Big(a_1+(2-\beta)\frac{q_1}{d_3}\Big)\ep\right)
\quad 
  \mbox{on}\ (T_\ep,\infty), 
\end{align*}
where for the estimates we relied on nonnegativity of $a_1-\frac{\beta q_1}{d_3}$ as guaranteed by \eqref{conditionqi}.
We choose $\ol{u_\ep}\in(0,\infty)$ such that 
$u(\cdot,T_\ep)\leq\ol{u_\ep}$ in $\Omega$ 
and we denote by $\ol{z}:[T_\ep,\infty)\to\Rone$ 
the function solving 
\begin{align*}
\begin{cases}
  \ol{z}'=\mu_1\ol{z}\left(1-\Big(1-\frac{\alpha q_1}{d_3}\Big)\ol{z}
  -\frac{\alpha q_1}{d_3}l_1-a_1l_2
  +\Big(a_1+(2-\beta)\frac{q_1}{d_3}\Big)\ep\right)\quad \text{in } (T_\ep,\infty),
\\
  \ol{z}\left(T_\ep\right)=\ol{u_\ep}, 
\end{cases}
\end{align*}
which satisfies 
\begin{align*}
  \ol{z}\left(t\right)\to
  \frac{\left(d_3-\alpha q_1l_1-a_1d_3l_2
  +\Big(a_1d_3+(2-\beta)q_1\Big)\ep\right)_+}{d_3-\alpha q_1}
\quad 
  \mbox{as}\ t\to\infty. 
\end{align*}
By comparison we obtain
\begin{align}\label{equL1}
  L_1=\limsup_{t\to\infty}
  \left(\max_{x\in\ol{\Omega}}u(x,t)\right)
  \leq 
  \limsup_{t\to\infty}\ol{z}(t)=
  \dfrac{\left(d_3-\alpha q_1l_1-a_1d_3l_2
  +\Big(a_1d_3+(2-\beta)q_1\Big)\ep\right)_+}{d_3-\alpha q_1}.
\end{align}
%
%
%
%
On the other hand, making use of 
the other estimates in \eqref{estiu} and \eqref{estiw} and again of \eqref{conditionqi},
we have 
\begin{align*}
  u_t-\Lone u&= \mu_1 u\left(1-\Big(1-\frac{\alpha q_1}{d_3}\Big)u
     -\Big(a_1-\frac{\beta q_1}{d_3}\Big)v
     -\frac{\gamma q_1}{d_3} w\right)
\\
  &\geq \mu_1u\left(1-\Big(1-\frac{\alpha q_1}{d_3}\Big)u-\Big(a_1-\frac{\beta q_1}{d_3}\Big)\left(L_2+\ep\right)
        -\frac{q_1}{d_3}\left(\alpha L_1+\beta L_2+2\ep\right)\right)
\\
  &= \mu_1u\left(1-\Big(1-\frac{\alpha q_1}{d_3}\Big)u-\frac{\alpha q_1}{d_3}L_1-a_1L_2-\Big(a_1-(2-\beta)\frac{q_1}{d_3}\Big)\ep\right)\quad 
  \mbox{on}\ (T_\ep,\infty).
\end{align*}
Choosing $\underline{u_\ep}>0$ such that 
$u\left(\cdot,T_\ep\right)\geq\underline{u_\ep}$ in $\Omega$ and 
denoting by $\ul{z}:[T_\ep,\infty)\to\Rone$ 
the function solving 
\begin{align*}
\begin{cases}
  \ul{z}'=\mu_1\ul{z}
  \left(1-\Big(1-\frac{\alpha q_1}{d_3}\Big)\ul{z}-\frac{\alpha q_1}{d_3}L_1-a_1L_2-\Big(a_1-(2-\beta)\frac{q_1}{d_3}\Big)\ep\right)\quad \text{in } (T_\ep,\infty),
\\
  \ul{z}\left(T_\ep\right)=\underline{u_\ep},
\end{cases}
\end{align*}
which satisfies 
\begin{align*}
  \ul{z}\left(t\right)\to 
  \frac{\left(d_3-\alpha q_1L_1-a_1d_3L_2
  -\Big(a_1d_3+(2-\beta)q_1\Big)\ep\right)_+}{d_3-\alpha q_1}
\quad
  \mbox{as}\ t\to\infty.
\end{align*}
we obtain from the comparison theorem that 
\begin{align}\label{equl1}
  l_1=
  \liminf_{t\to\infty}
  \left(\min_{x\in\ol{\Omega}}u\left(x,t\right)\right)
  \geq \liminf_{t\to\infty}\ul{z}\left(t\right)
  \geq \frac{d_3-\alpha q_1L_1-a_1d_3L_2-(a_1d_3+(2-\beta)q_1)\ep}{d_3-\alpha q_1}
\end{align}
holds. 
Because $\ep>0$ was arbitrary, 
\eqref{equL1} and \eqref{equl1} entail \eqref{estiL1l1}. 
\end{proof}
%
%
%
%
%
\begin{lem}\label{lemestiL2}
Under the assumptions of Theorem \ref{mainth} and with notation as in \eqref{defLl}, the following inequalities hold:
\begin{align*}
  L_2\leq \dfrac{\left(
  d_3-a_2d_3l_1-\beta q_2 l_2
  \right)_+}{d_3-\beta q_2}
\quad 
  {\rm and}
\quad
  l_2\geq \frac{d_3-a_2d_3L_1-\beta q_2L_2}{d_3-\beta q_2}.
\end{align*}
\end{lem}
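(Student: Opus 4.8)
\textbf{Proof proposal for Lemma \ref{lemestiL2}.}

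The plan is to repeat the argument of Lemma \ref{lemestiL1} verbatim with the roles of $u$ and $v$ (and the corresponding parameters) interchanged. More precisely, I would start from the second and third equations of \eqref{cp}, substituting $\Delta w=\tfrac{1}{d_3}(\gamma w-\alpha u-\beta v)$ into the $v$-equation exactly as \eqref{eq:ueq_tobereplacedforv} does for $u$; this yields
\begin{align*}
  v_t-\Ltwo v&=\mu_2 v\left(1-\Big(1-\tfrac{\beta q_2}{d_3}\Big)v
     -\Big(a_2-\tfrac{\alpha q_2}{d_3}\Big)u
     -\tfrac{\gamma q_2}{d_3}w\right).
\end{align*}
The key structural point, which makes the comparison work, is that $a_2-\tfrac{\alpha q_2}{d_3}\ge 0$ by the second part of \eqref{conditionqi} (the condition $q_2\le \tfrac{a_2 d_3}{\alpha}$), so that replacing $u$ by its eventual bounds and $w$ by the bounds \eqref{estiw} produces inequalities with the correct sign.

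First I would fix $\ep>0$, invoke \eqref{estiu} and \eqref{estiw} to get $T_\ep>0$, and on $(T_\ep,\infty)$ bound the $u$-term from below by $l_1-\ep$ and $w$ from below by $\tfrac{1}{\gamma}(\alpha l_1+\beta l_2-2\ep)$ to obtain
\begin{align*}
  v_t-\Ltwo v&\le \mu_2 v\left(1-\Big(1-\tfrac{\beta q_2}{d_3}\Big)v-\tfrac{\beta q_2}{d_3}l_2-a_2l_1+\Big(a_2+(2-\alpha)\tfrac{q_2}{d_3}\Big)\ep\right).
\end{align*}
Then, choosing $\ol{v_\ep}>0$ with $v(\cdot,T_\ep)\le\ol{v_\ep}$ and letting $\ol{z}$ solve the associated scalar logistic ODE with this right-hand side (ignoring the operator $\Ltwo$, whose contribution vanishes upon comparison since $v$ satisfies Neumann conditions and $\ol{z}$ is spatially constant), the comparison principle gives $L_2\le\limsup_{t\to\infty}\ol z(t)$, and the logistic ODE converges to $\big(d_3-\beta q_2 l_2-a_2 d_3 l_1+(a_2d_3+(2-\alpha)q_2)\ep\big)_+/(d_3-\beta q_2)$. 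Symmetrically, bounding $u$ above by $L_1+\ep$ and $w$ above by $\tfrac{1}{\gamma}(\alpha L_1+\beta L_2+2\ep)$ and comparing from below with a subsolution $\ul z$ started below $v(\cdot,T_\ep)$ yields $l_2\ge\big(d_3-\beta q_2 L_2-a_2 d_3 L_1-(a_2d_3+(2-\alpha)q_2)\ep\big)_+/(d_3-\beta q_2)$. Letting $\ep\downarrow 0$ in both estimates gives the claim; note $d_3-\beta q_2>0$ by the first part of \eqref{conditionqi}, so all denominators are positive.

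I do not expect a genuine obstacle here: every ingredient — positivity of $d_3-\beta q_2$ and of $a_2-\tfrac{\alpha q_2}{d_3}$, the boundedness from Lemma \ref{lemglobal}, the eventual bounds \eqref{estiu}–\eqref{estiw}, and the scalar comparison principle — is already in place, and the computation is the mirror image of Lemma \ref{lemestiL1}. The only point requiring a word of care is the bookkeeping of the $\ep$-coefficient $(2-\alpha)$, which arises exactly as $(2-\beta)$ did in Lemma \ref{lemestiL1} from combining the $\ep$ in \eqref{estiu} with the $2\ep$ in \eqref{estiw}; since this term is multiplied by $\ep$ and disappears in the limit, its sign is irrelevant. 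Hence the proof can reasonably be abbreviated to ``interchange the roles of $u$ and $v$ in the proof of Lemma \ref{lemestiL1}.''
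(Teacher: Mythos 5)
Your proposal is correct and is exactly the route the paper takes: the paper's proof of Lemma \ref{lemestiL2} consists precisely of the remark that one repeats the proof of Lemma \ref{lemestiL1} starting from the identity $v_t-\Ltwo v=\mu_2 v\bigl(1-\bigl(1-\tfrac{\beta q_2}{d_3}\bigr)v-\bigl(a_2-\tfrac{\alpha q_2}{d_3}\bigr)u-\tfrac{\gamma q_2}{d_3}w\bigr)$ in place of \eqref{eq:ueq_tobereplacedforv}. Your identification of the sign condition $a_2-\tfrac{\alpha q_2}{d_3}\ge 0$ from \eqref{conditionqi} as the point that makes the eventual comparison go through, and your bookkeeping of the $\ep$-terms, are both accurate.
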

%
\begin{proof}
Repeating the arguments from the proof of Lemma \ref{lemestiL1}, this time with
\begin{align*}
v_t-\Ltwo v&= \mu_2 v\left(1-\Big(1-\frac{\beta q_2}{d_3}\Big)v
     -\Big(a_2-\frac{\alpha q_2}{d_3}\Big)u
     -\frac{\gamma q_2}{d_3} w\right),
\end{align*}
instead of \eqref{eq:ueq_tobereplacedforv},
leads to  Lemma \ref{lemestiL2}.
\end{proof}
%
Before we continue, let us briefly verify that the differences appearing in the numerators of the upper bounds for $L_1$ and $L_2$ are already nonnegative, so that we can neglect the positive part operator $(\cdot)_+$. Later on, this will allow us to conclude convergence to the non-zero equilibrium state.
%
%
%
\begin{lem}\label{lempositivnumerator}
Under the assumptions of Theorem \ref{mainth} and with notation as in \eqref{defLl}, 
\begin{align*}
 d_3-\alpha q_1l_1-a_1d_3l_2\geq 0\quad\text{ and }\quad  d_3-a_2d_3l_1-\beta q_2 l_2\geq0.
\end{align*}
\end{lem}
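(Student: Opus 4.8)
The plan is to derive a system of four linear inequalities relating $L_1, l_1, L_2, l_2$ from Lemmas~\ref{lemestiL1} and \ref{lemestiL2}, and then to combine them so as to isolate the two numerators appearing above. The key observation is that the lower bound for $l_1$ in \eqref{estiL1l1} can be rewritten as
\begin{align*}
 (d_3-\alpha q_1)l_1 \geq d_3 - \alpha q_1 L_1 - a_1 d_3 L_2,
\end{align*}
i.e.\ $d_3-\alpha q_1 l_1 - a_1 d_3 l_2 \geq d_3 - \alpha q_1 L_1 - a_1 d_3 L_2 + (a_1 d_3)(L_2-l_2) \cdot 0$ — more precisely, one should first bound $L_1$ and $L_2$ from above using the upper estimates in Lemmas~\ref{lemestiL1} and \ref{lemestiL2}, substitute these into the lower bounds for $l_1$ and $l_2$, and thereby obtain a closed system purely in $l_1, l_2$. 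Assuming for contradiction that (say) $d_3 - \alpha q_1 l_1 - a_1 d_3 l_2 < 0$, the positive-part operator in the upper bound for $L_1$ forces $L_1 \leq 0$, hence $L_1 = 0$ (since $L_1\geq l_1\geq 0$), and then also $l_1 = 0$; plugging $l_1=0$ back in gives $d_3 - a_1 d_3 l_2 < 0$, i.e.\ $l_2 > 1/a_1 \geq 1$.

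Next I would run the symmetric argument, or simply use the established bound $l_2 \leq L_2$: from the upper estimate in Lemma~\ref{lemestiL2} with $l_1 \geq 0$ we get $(d_3-\beta q_2) L_2 \leq (d_3 - \beta q_2 l_2)_+$, so $L_2 \leq \tfrac{d_3}{d_3-\beta q_2}$ whenever the bracket is positive (and $L_2=0$ otherwise, which is incompatible with $l_2>1$). Combining $l_2 \leq L_2 \leq \tfrac{d_3 - \beta q_2 l_2}{d_3-\beta q_2}$ yields $(d_3-\beta q_2)l_2 + \beta q_2 l_2 \leq d_3$, that is $l_2 \leq 1$, contradicting $l_2 > 1$. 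Hence $d_3 - \alpha q_1 l_1 - a_1 d_3 l_2 \geq 0$. The second inequality, $d_3 - a_2 d_3 l_1 - \beta q_2 l_2 \geq 0$, follows by the completely analogous computation with the roles of the two species interchanged (using $l_1 \leq L_1 \leq \tfrac{d_3}{d_3 - \alpha q_1}$, which comes from Lemma~\ref{lemestiL1} together with $l_2\geq 0$).

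The main obstacle I anticipate is bookkeeping with the positive-part operators: one must argue carefully that if a numerator like $d_3-\alpha q_1 l_1 - a_1 d_3 l_2$ were negative then the corresponding limsup $L_1$ is forced to vanish, and then trace the consequences through the chain of inequalities without circular reasoning. The conditions \eqref{conditionqi} (which give $\alpha q_1 < d_3/2$, hence $d_3 - \alpha q_1 > 0$, and likewise $d_3 - \beta q_2 > 0$) are exactly what is needed to keep all the denominators positive and the divisions legitimate; condition \eqref{conditionqi2} is not needed for this lemma but will enter later when pinning down the exact values of $L_i, l_i$. Once the sign issue is handled, the rest is a short linear manipulation.
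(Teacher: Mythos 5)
Your proposal is correct, and it follows the same broad strategy as the paper (a contradiction argument in which the positive-part operator in the upper bounds of Lemmas~\ref{lemestiL1} and \ref{lemestiL2} forces the corresponding $L_i$, and hence $l_i$, to vanish), but the way you close the contradiction is genuinely different and slightly leaner. The paper splits into three sign cases; in the mixed case (say, second numerator negative, first nonnegative) it uses the inequalities for the first species together with the condition $q_1<\tfrac{d_3}{2\alpha}$ to force $L_1=l_1$ and then $L_1=l_1=1$ exactly, before feeding this into the lower bound for $l_2$ to reach $0=l_2>0$. You instead treat each numerator's negativity independently: from $L_1=l_1=0$ you deduce $l_2>1/a_1>1$, and then the self-consistency chain $l_2\le L_2\le \tfrac{(d_3-\beta q_2 l_2)_+}{d_3-\beta q_2}$ gives $d_3 l_2\le d_3$, i.e.\ $l_2\le 1$ (or $L_2=0$), a contradiction. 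This needs only $d_3-\alpha q_1>0$ and $d_3-\beta q_2>0$ rather than the factor-$2$ conditions, and it automatically covers the paper's ``both numerators negative'' case as well; both arguments ultimately invoke the weak-competition hypothesis $a_1,a_2<1$. One cosmetic remark: the displayed line in your first paragraph ending in ``$+(a_1d_3)(L_2-l_2)\cdot 0$'' is garbled and should be deleted, since the argument you actually carry out afterwards does not use it; also note that the degenerate case $a_1=0$ (where $1/a_1$ is undefined) yields the contradiction $d_3<0$ immediately, which you implicitly handle.
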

%
\begin{proof}
We work along the lines of a contradiction argument to show that the undesired cases can in fact not appear. If 
\begin{align}\label{part1-1}
  d_3-\alpha q_1l_1-a_1 d_3 l_2<0
\qquad\text{and}\qquad
  d_3-a_2 d_3 l_1-\beta q_2l_2<0, 
\end{align}
we obtain from 
Lemma \ref{lemestiL1}, 
Lemma \ref{lemestiL2} and the nonnegativity 
of $u$ and $v$ that 
\begin{align*}
 0\leq l_1\leq L_1\leq 0 \qquad \text{and}\qquad 0\leq l_2\leq L_2\leq 0.
\end{align*}
Plugging this back into \eqref{part1-1} yields the contradiction $d_3<0$. In the case
\begin{align*}
  d_3-\alpha q_1l_1-a_1 d_3 l_2\geq 0
\qquad\text{and}\qquad
  d_3-a_2 d_3 l_1-\beta q_2l_2<0, 
\end{align*}
Lemma \ref{lemestiL1}, Lemma \ref{lemestiL2} and the nonnegativity of $v$ show that
\begin{align}\label{part2-1}
L_2=l_2=0\quad\text{and}\quad L_1\leq\frac{d_3-\alpha q_1 l_1}{d_3-\alpha q_1},\quad l_1\geq\frac{d_3-\alpha q_1 L_1}{d_3-\alpha q_1}.
\end{align}
Herein, the first two inequalities lead to
\begin{align*}
(d_3-\alpha q_1) L_1&\leq d_3-\alpha q_1 l_1,\\
(d_3-\alpha q_1)l_1&\geq d_3-\alpha q_1 L_1,
\end{align*}
which implies
\begin{align*}
(d_3-2\alpha q_1)(L_1-l_1)\leq 0.
\end{align*}
Due to $q_1<\frac{d_3}{2\alpha}$ (by \eqref{conditionqi}) this yields $L_1=l_1$. Together with \eqref{part2-1} this equality shows 
\begin{align*}
L_1=\frac{d_3-\alpha q_1 L_1}{d_3-\alpha q_1},
\end{align*}
which leads to $l_1=L_1=1$ because of $d_3>0$. Making use of this and \eqref{part2-1}, we conclude that the second inequality from Lemma \ref{lemestiL2} implies the contradiction
\begin{align*}
0=l_2\geq\frac{d_3-a_2d_3L_1-\beta q_2 L_2}{d_3-\beta q_2}=\frac{(1-a_2)d_3}{d_3-\beta q_2}>0,
\end{align*}
since $d_3>0$, $d_3>\beta q_2$ due to \eqref{conditionqi} and, by \eqref{cond:ai}, $1>a_2$. The case 
\begin{align*}
  d_3-\alpha q_1l_1-a_1 d_3 l_2< 0
\qquad\text{and}\qquad
  d_3-a_2 d_3 l_1-\beta q_2l_2\geq0
\end{align*}
can be treated in a similar fashion relying on the facts that $q_2<\frac{d_3}{2\beta}$ by \eqref{conditionqi} and $a_1<1$ by \eqref{cond:ai} to obtain the contradiction $0=l_1>0$.
\end{proof}

Having explicit bounds for $L_1$ and $L_2$ at hand, we can now calculate the exact values of $L_1$ and $L_2$.
%
%
%
\begin{lem}\label{lemuast}
Let the assumptions of Theorem \ref{mainth} be satisfied. 
Then 
\begin{align*}
  L_1=l_1=u^{\ast},
\quad
  L_2=l_2=v^{\ast},
\end{align*}
where 
\begin{align*}
  u^{\ast}:=\dfrac{1-a_1}{1-a_1a_2},
\quad
  v^{\ast}:=\dfrac{1-a_2}{1-a_1a_2}.
\end{align*}
Moreover the solution of \eqref{cp} converges to 
nontrivial steady states, i.e., 
\begin{align*}
  u\left(t\right)\to u^{\ast}, 
\quad 
  v\left(t\right)\to v^{\ast}, 
\quad
  w\left(t\right)\to \frac{\alpha u^\ast+\beta v^\ast}{\gamma}
\end{align*}
as $t\to\infty$, uniformly in $\Omega$. 
\end{lem}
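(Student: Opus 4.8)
The plan is to feed the four one-sided bounds from Lemmas~\ref{lemestiL1} and \ref{lemestiL2} --- which by Lemma~\ref{lempositivnumerator} hold with the positive-part operator removed from the upper bounds for $L_1$ and $L_2$ --- into a single linear system in the four unknowns $L_1,l_1,L_2,l_2$, to show that it can only be satisfied if $L_1=l_1$ and $L_2=l_2$, and then to pin down the common values by an elementary computation.

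Concretely, abbreviating $A:=d_3-\alpha q_1>0$ and $B:=d_3-\beta q_2>0$ (positivity from \eqref{conditionqi}), the estimates read $AL_1\le d_3-\alpha q_1l_1-a_1d_3l_2$, $Al_1\ge d_3-\alpha q_1L_1-a_1d_3L_2$, $BL_2\le d_3-a_2d_3l_1-\beta q_2l_2$, $Bl_2\ge d_3-a_2d_3L_1-\beta q_2L_2$. I would subtract the second from the first and the fourth from the third, obtaining $(d_3-2\alpha q_1)(L_1-l_1)\le a_1d_3(L_2-l_2)$ and $(d_3-2\beta q_2)(L_2-l_2)\le a_2d_3(L_1-l_1)$. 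Since $L_1-l_1\ge 0$, $L_2-l_2\ge 0$ and both $d_3-2\alpha q_1$ and $d_3-2\beta q_2$ are positive by \eqref{conditionqi}, multiplying the two inequalities shows that $(L_1-l_1)(L_2-l_2)>0$ would force $(d_3-2\alpha q_1)(d_3-2\beta q_2)\le a_1a_2d_3^2$, contradicting \eqref{conditionqi2}; hence one of the differences vanishes, and reinserting this into the corresponding inequality forces the other to vanish as well. This yields $L_1=l_1=:\ell_1$ and $L_2=l_2=:\ell_2$.

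With $L_i=l_i$ the two outer inequalities in each pair degenerate to equalities, which after simplification read $\ell_1+a_1\ell_2=1$ and $a_2\ell_1+\ell_2=1$; since $1-a_1a_2>0$ by \eqref{cond:ai}, the unique solution is $\ell_1=\frac{1-a_1}{1-a_1a_2}=u^\ast$ and $\ell_2=\frac{1-a_2}{1-a_1a_2}=v^\ast$. Then $L_1=l_1=u^\ast$ means precisely that for every $\ep>0$ one eventually has $u^\ast-\ep<\min_{\ol\Omega}u(\cdot,t)\le\max_{\ol\Omega}u(\cdot,t)<u^\ast+\ep$, i.e.\ $u(t)\to u^\ast$ uniformly in $\Omega$, and likewise $v(t)\to v^\ast$. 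For the signal I would note that the constant $\frac{\alpha u^\ast+\beta v^\ast}{\gamma}$ solves $-d_3\Delta(\cdot)+\gamma(\cdot)=\alpha u^\ast+\beta v^\ast$ with homogeneous Neumann data, so $w(\cdot,t)-\frac{\alpha u^\ast+\beta v^\ast}{\gamma}$ solves the same type of elliptic problem with right-hand side $\alpha(u(\cdot,t)-u^\ast)+\beta(v(\cdot,t)-v^\ast)$; the maximum-principle bound recorded before \eqref{estiw} then gives $\gamma\,\|w(\cdot,t)-\tfrac{\alpha u^\ast+\beta v^\ast}{\gamma}\|_{L^\infty(\Omega)}\le\|\alpha(u(\cdot,t)-u^\ast)+\beta(v(\cdot,t)-v^\ast)\|_{L^\infty(\Omega)}\to 0$.

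The genuinely delicate point is the algebraic step ruling out $(L_1-l_1)(L_2-l_2)>0$: this is where all the hypotheses \eqref{conditionqi}--\eqref{conditionqi2} are used simultaneously (positivity of $A$, $B$, of $d_3-2\alpha q_1$, $d_3-2\beta q_2$, and the product condition), and where Lemma~\ref{lempositivnumerator} is indispensable, since without nonnegativity of the numerators one could not subtract the inequalities cleanly. Everything afterwards --- solving the $2\times 2$ Lotka--Volterra system and the elliptic estimate for $w$ --- is routine.
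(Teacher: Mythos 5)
Your proposal is correct and follows essentially the same route as the paper: the same four inequalities from Lemmas~\ref{lemestiL1}--\ref{lempositivnumerator} are subtracted pairwise to bound $L_1-l_1$ and $L_2-l_2$ against each other, and \eqref{conditionqi2} forces both differences to vanish (the paper substitutes one inequality into the other rather than multiplying them, which is the same algebra), after which the degenerate equalities give the Lotka--Volterra system $\ell_1+a_1\ell_2=1$, $a_2\ell_1+\ell_2=1$ and the maximum principle handles $w$.
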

%
\begin{proof}
At first, we shall prove that in fact the solutions converge, namely that 
\begin{align*}
  L_1=l_1,\qquad L_2=l_2
\end{align*}
hold. 
Thanks to Lemmata \ref{lemestiL1}, \ref{lemestiL2} and \ref{lempositivnumerator}
we know that the inequalities 
\begin{align}
  \left(d_3-\alpha q_1\right)L_1\leq d_3-\alpha q_1l_1-a_1d_3l_2,\label{fir}
\\
  \left(d_3-\beta q_2\right)L_2\leq d_3-a_2d_3l_1-\beta q_2l_2,\label{sec}
\\
  \left(d_3-\alpha q_1\right)l_1\geq d_3-\alpha q_1L_1-a_1d_3L_2,\label{thi}
\\
  \left(d_3-\beta q_2\right)l_2\geq d_3-a_2d_3L_1-\beta q_2L_2\label{fou}
\end{align}
hold. From \eqref{fir} and \eqref{thi} we extract 
\begin{align*}
  \left(d_3-\alpha q_1\right)\left(L_1-l_1\right)&\leq \alpha q_1\left(L_1-l_1\right)+a_1d_3\left(L_2-l_2\right).
\end{align*}
Re-ordering this inequality while paying attention to \eqref{conditionqi}, we see that
\begin{align}\label{L1-l1}
  L_1-l_1\leq \dfrac{a_1d_3}{d_3-2\alpha q_1}\left(L_2-l_2\right).
\end{align}
Taking into account \eqref{sec} and \eqref{fou}, 
from a similar argument we obtain 
\begin{align}\label{L2-l2}
  L_2-l_2\leq \dfrac{a_2d_3}{d_3-2\beta q_2}\left(L_1-l_1\right).
\end{align}
Combination of \eqref{L1-l1} and \eqref{L2-l2} shows
\begin{align*}
  L_1-l_1\leq 
  \dfrac{a_1d_3}{d_3-2\alpha q_1}\cdot\dfrac{a_2d_3}{d_3-2\beta q_2}\left(L_1-l_1\right),
\end{align*}

which, by the smallness condition on $q_1$, $q_2$ in \eqref{conditionqi2}, implies $L_1=l_1$. 
Plugging this result into \eqref{L2-l2} 
also yields $L_2=l_2$. 
Lastly, we shall prove $l_1=u^{\ast}$ 
and $l_2=v^{\ast}$. 
From \eqref{fir}, \eqref{thi}, $L_1=l_1$ and $L_2=l_2$ we have
\begin{align*}
  \left(d_3-\alpha q_1\right)l_1 &= d_3-\alpha q_1l_1-a_1d_3 l_2,
\end{align*}
which, after re-ordering, leads to 
\begin{align}\label{l1}
  l_1=1-a_1l_2.
\end{align}
Similarly we obtain
from \eqref{sec}, \eqref{fou}, $L_1=l_1$ and $L_2=l_2$ that 
\begin{align}\label{l2}
  l_2=1-a_2l_1.
\end{align}
Combination of \eqref{l1} and \eqref{l2} therefore leads to 
\begin{align*}
  l_1=\dfrac{1-a_1}{1-a_1a_2}=u^{\ast},
\\\notag
  l_2=\dfrac{1-a_2}{1-a_1a_2}=v^{\ast}.
\end{align*}
$L_1=l_1=u^{\ast}$ and $L_2=l_2=v^{\ast}$ imply 
that 
\begin{align*}
  u\left(t\right)\to u^{\ast}, \quad v\left(t\right)\to v^{\ast}\quad 
  \mbox{as}\ t\to\infty,
\end{align*}
uniformly in $\Omega$. 
Finally, accordance with \eqref{estiw}, for any 
$\ep>0$ there is $T_\ep>0$ such that 
\begin{align*}
 \alpha u^\ast+\beta v^\ast-2\ep < \gamma w\left(x,t\right) < \alpha u^\ast+\beta v^\ast+2\ep
\quad
  \mbox{for all }\  (x,t)\in \Omega\times (T_\ep,\infty)
\end{align*}
and hence $w(\cdot,t)\to \frac{\alpha u^\ast+\beta v^\ast}{\gamma}$ as $t\to\infty$, uniformly in $\Omega$. 
\end{proof}
%
With this lemma, we actually have completed the proof of Theorem \ref{mainth}.
%
%
%
\begin{proof}[{\rm \bf Proof of Theorem \ref{mainth}}]
Part (i) follows from Lemma \ref{lemglobal}, while (ii) is contained in Lemma \ref{lemuast}.
\end{proof}

\section{Numerical experiments}
We have numerically implemented system \eqref{cp} for $\tau=0$ in the domain $\Omega=(0,3)\subset \mathbb{R}$, employing a finite difference discretization together with an explicit Euler scheme in time. Mesh size was given by delta\_x=0.005 and delta\_t=0.00001. In this section we want to illustrate some of the solution behaviour that can be observed in the simulation.

In all of the following experiments we have chosen $d_1=d_2=d_3=1$ and $\alpha=\beta=\gamma=1$. 

\textbf{First observation: The expected behaviour.} Starting from initial data 
\[u_0(x)=2+1.5\cos(\pi\cdot(x-0.6))\text{ and }v_0(x)=(1-x)^2,\] and for parameters $\mu_1=\mu_2=2$, $a_1=0.6$, $a_2=0.4$ and $q_1=0.2$, $q_2=0.1$, solutions are revealed to converge to the (constant) coexistence steady state. 

Fig. \ref{fig-expected} shows the graphs of $u(\cdot,t_0)$, $v(\cdot,t_0)$, $w(\cdot,t_0)$ after 
1, 5000, 15000, 100000, 200000, 1000000 time steps (i.e. for $t_0=0.00001$, $t_0=0.05$, $t_0=0.15$, $t_0=1$, $t_0=2$ and $t_0=10$, respectively). 
\begin{figure}[!htbp]
\includegraphics[width=0.33\textwidth]{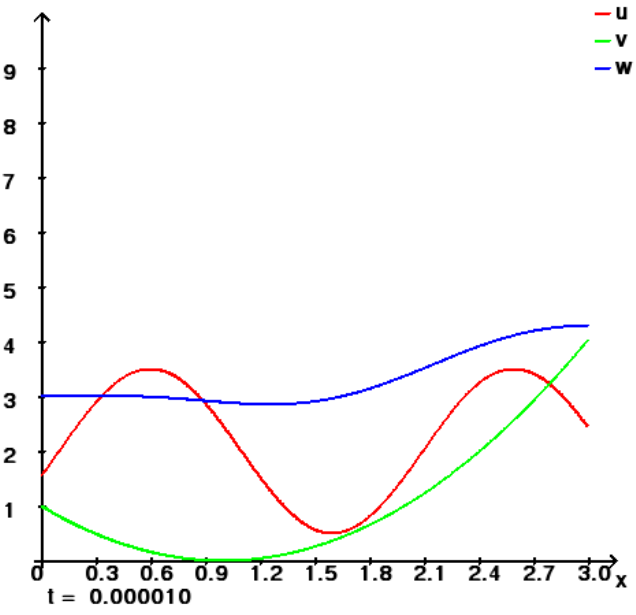}
\includegraphics[width=0.33\textwidth]{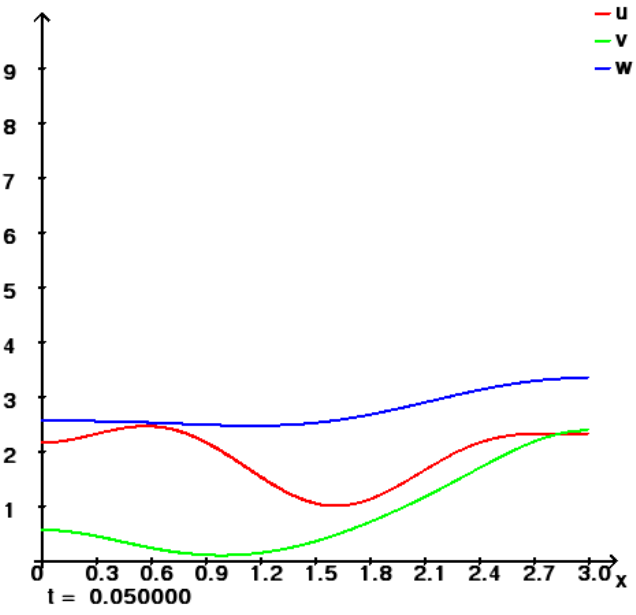}
\includegraphics[width=0.33\textwidth]{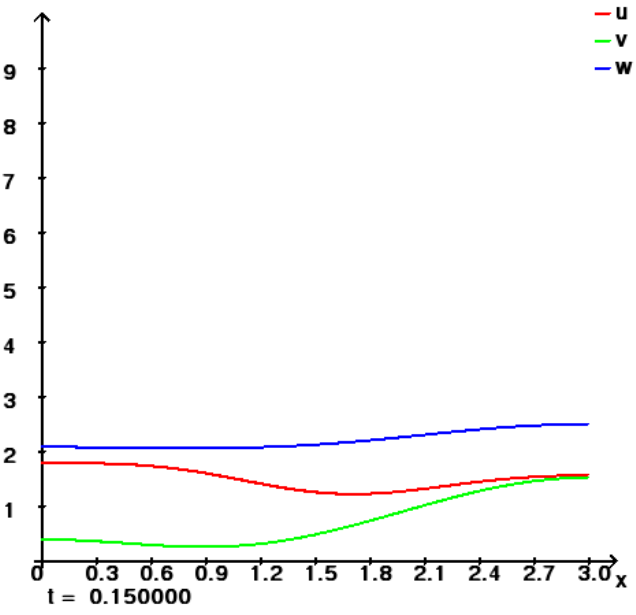}
\includegraphics[width=0.33\textwidth]{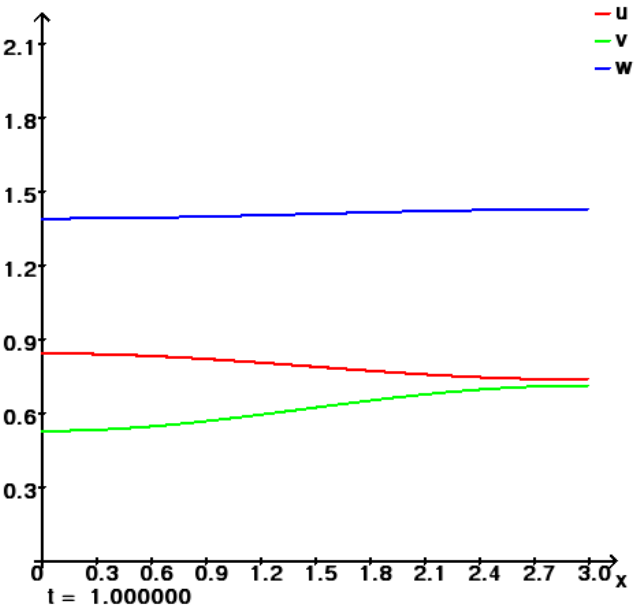}
\includegraphics[width=0.33\textwidth]{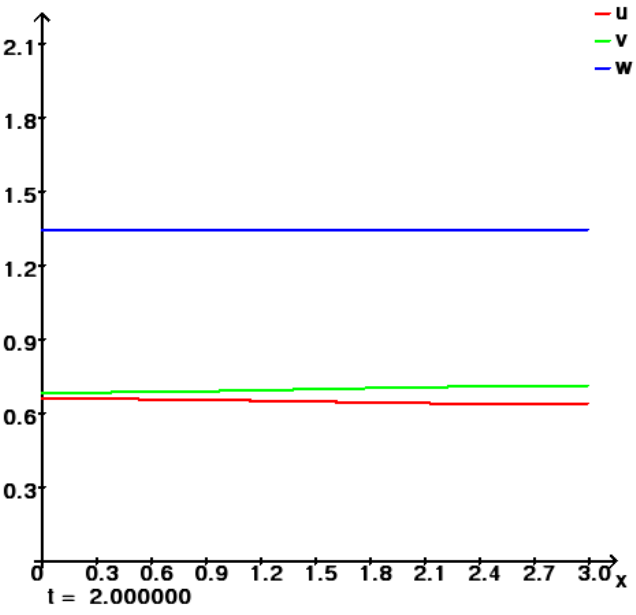}
\includegraphics[width=0.33\textwidth]{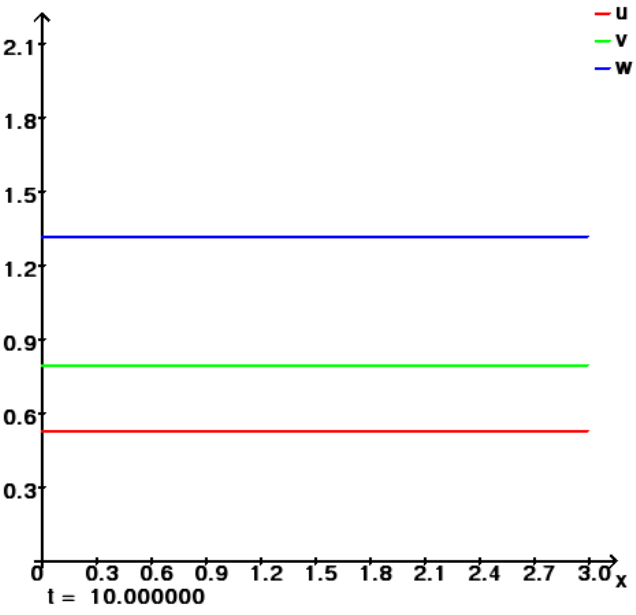}
\caption{The expected solution behaviour}\label{fig-expected}
\end{figure}

Because all parameters lie in the appropriate range, Theorem \ref{mainth} verifies that the convergence observed is the correct long-term behaviour. 

\textbf{Second observation: Large $q_i$.} Choosing $q_1=50$ and $q_2=0.1$, but otherwise the same parameters as in the first scenario, we witness the following ``large-time'' behaviour; pictures are taken at after 60000, 100000, 200000, 700000 time steps.

\begin{figure}[!hbtp]
\includegraphics[width=0.24\textwidth]{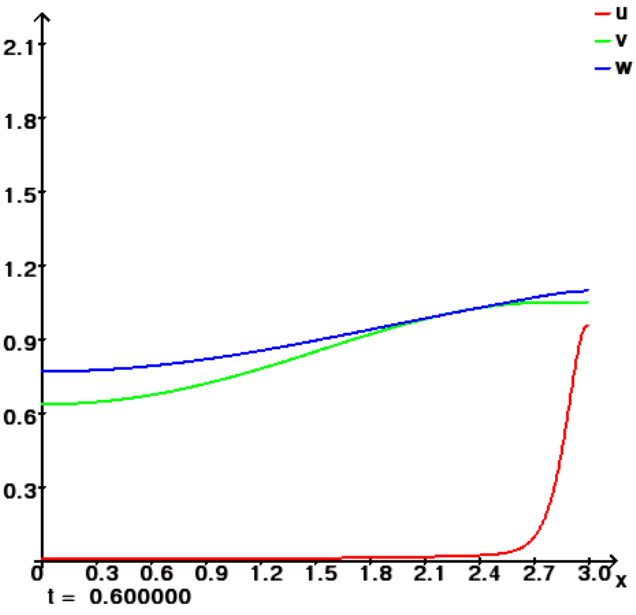}
\includegraphics[width=0.24\textwidth]{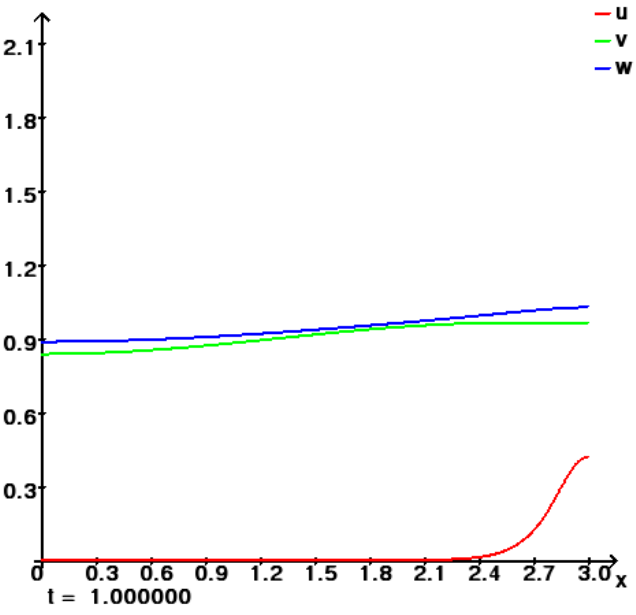}
\includegraphics[width=0.24\textwidth]{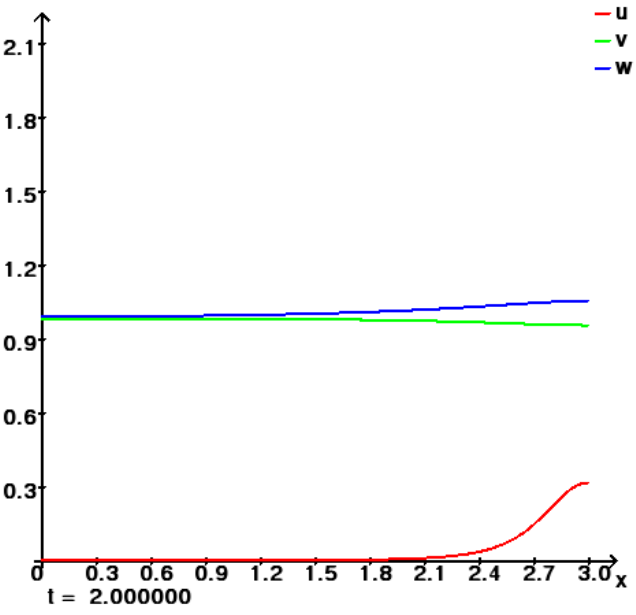}
\includegraphics[width=0.24\textwidth]{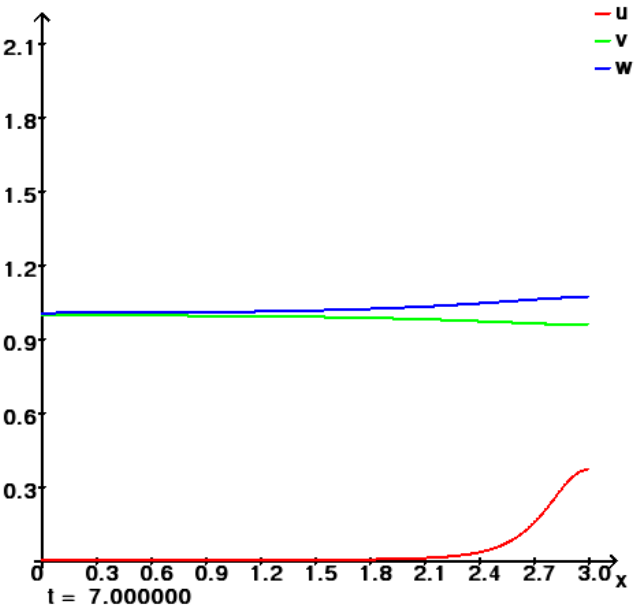}
\caption{Large values of $q_1$.}\label{fig-largeq}
\end{figure}
The solutions still can be perceived to converge; however, the limiting profile is significantly different and not even spatially homogeneous, as can be seen in the last graph of Figure \ref{fig-largeq}, where the solutions already have stabilized (which is indicated by the negligible changes between $t=2$ and $t=7$). 

Whereas on the left part of the domain, the first species seems to have vanished entirely, it has survived on the right, lured there by the comparatively high concentrations of the chemoattractant which go back to the high initial deployment of the second species there.

Altogether, this contrasting appearance of the limit 
demonstrates  
that at least some smallness condition on $q_1$, $q_2$ is essential for the validity of the conclusion of Theorem \ref{mainth}.

\textbf{Third observation: Behaviour on small time-scales.} 
The following pictures are taken from the experiment performed for the second observation, that is for the same choice of parameters as in the preceding subsection. This time, we want to have a closer look at early solution behaviour and correspondingly in Fig. \ref{fig-smalltime} depict the solution at time steps 
500, 600, 676, 1400, 13000 and 30000. 
\begin{figure}[!hbtp]
\includegraphics[width=0.33\textwidth]{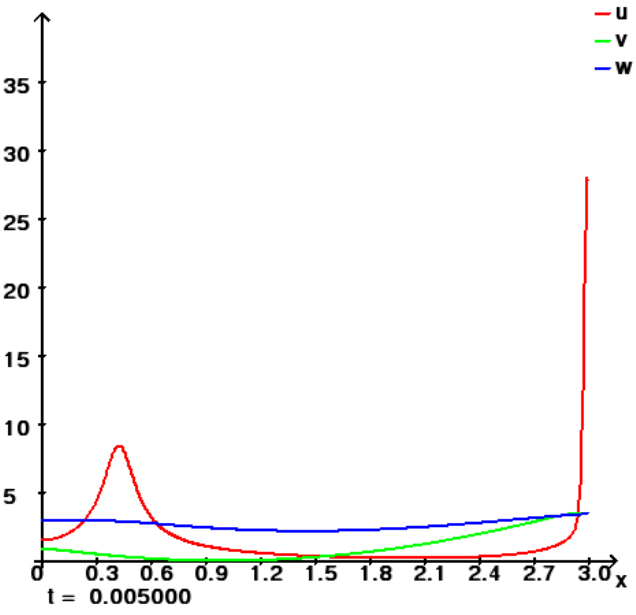}
\includegraphics[width=0.33\textwidth]{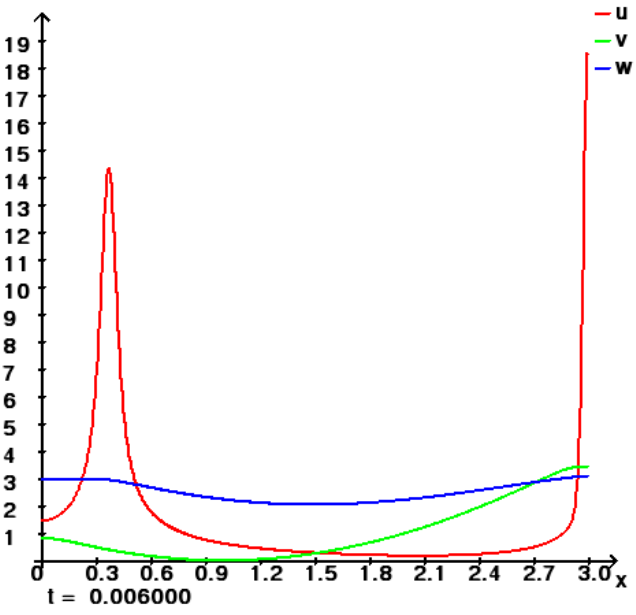}
\includegraphics[width=0.33\textwidth]{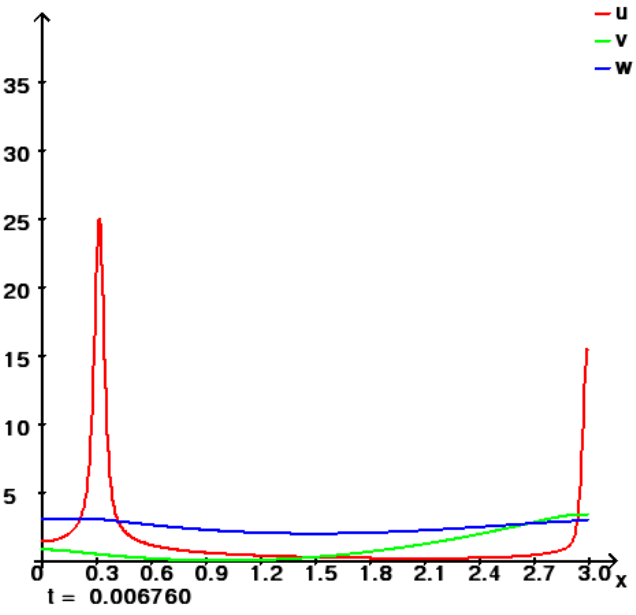}
\includegraphics[width=0.33\textwidth]{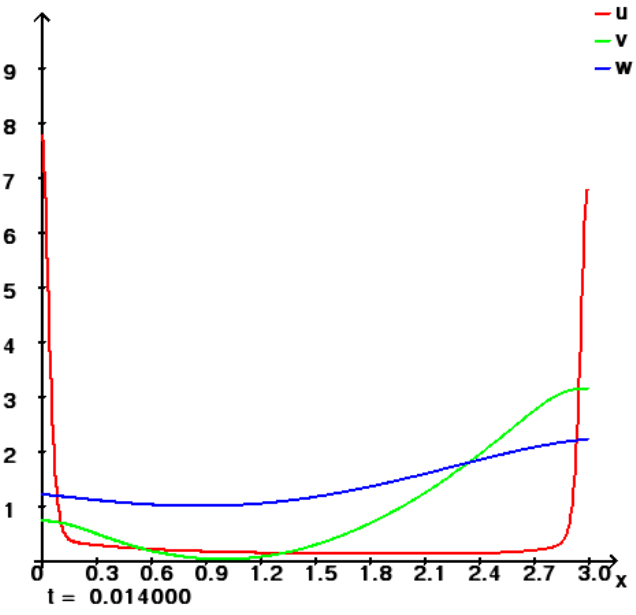}
\includegraphics[width=0.33\textwidth]{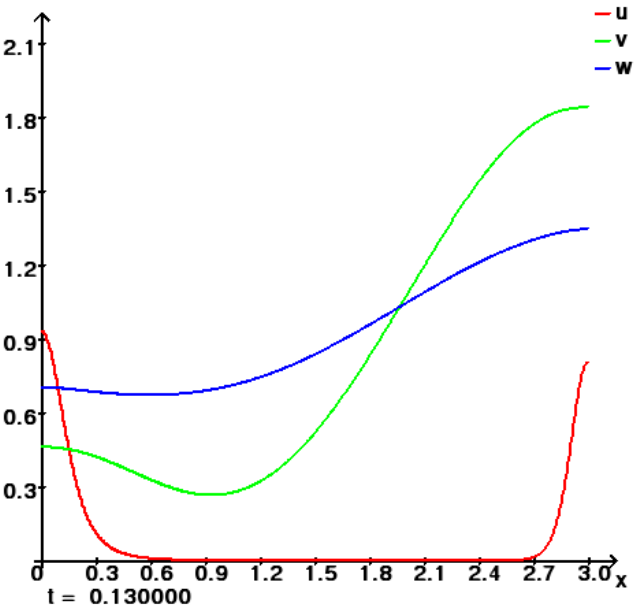}
\includegraphics[width=0.33\textwidth]{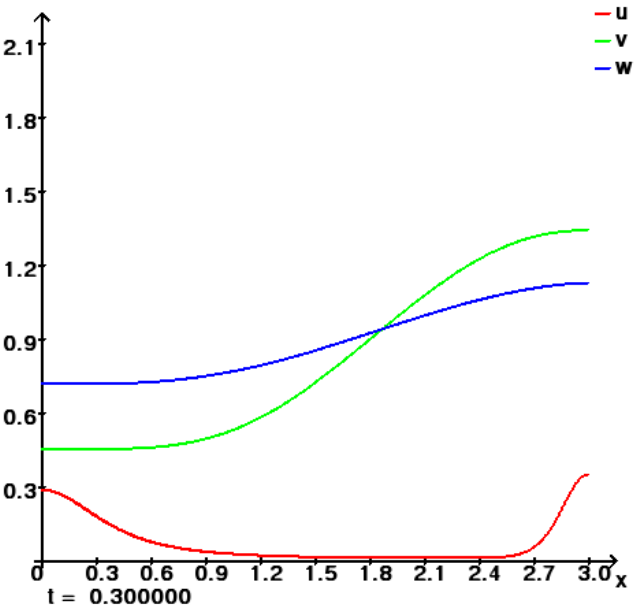}
\caption{Solution at smaller times. {\footnotesize (Note the changes of scale on the vertical axis.)}}\label{fig-smalltime}
\end{figure}

On these small time scale, 
that should be more representative 
for the actual dynamical evolution of species 
obeying model \eqref{cp} that the large-time 
limiting behaviour of solutions, 
diverse phenomena occur. 
In Figure \ref{fig-smalltime}, 
the simultaneous emergence
and movement of large aggregates of 
the chemotactically strongly active population can be 
observed as response to the emission of 
the joint signal by both species 
while they compete and the second population 
more slowly reacts to 
the same concentration gradients. 
It is worth noting that in this process 
astonishingly high concentrations can be 
reached; cf. Figure \ref{fig-max}, 
where the spatial maxima of $u$ and $v$ are 
plottet versus time $t\in[0,0.02\ldots]$, 
and although the initial concentration $u_0$ was 
bounded from above by $3.5$, 
values of more than $150$ are attained. Therefore, it seems reasonable to expect results resembling those dealing with transient growth phenomena for single-species chemotaxis models \cite{winkler_ctexceed,lankeit_exceed} also in the present two-species context.

\begin{figure}
\begin{center}\includegraphics{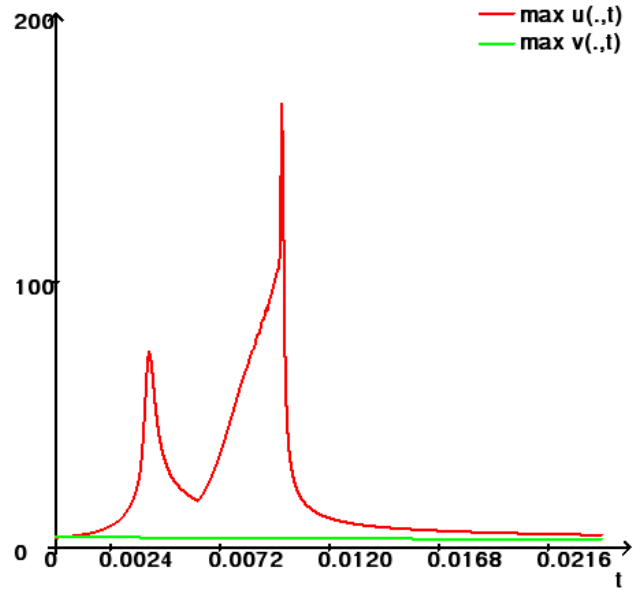}\end{center}
\caption{Evolution of the maximal population densities}\label{fig-max}
\end{figure}

Naturally, up to now analytical results (including those of the present article) have been confined to the solution behaviour in the limit $t\to \infty$, so that rigorous description and understanding of the intriguing performance of solutions over short periods of time remain as possibly worthwhile, albeit challenging, questions for future studies. 

\FloatBarrier
\section*{Acknowledgements}
A major part of this work was written while M.M. visited Paderborn University in February 2016 and during the ``International Workshop on Mathematical Analysis of Chemotaxis'' in Tokyo, in which all authors participated. The authors are grateful to Tokyo University of Science for funding these events. 


\newpage
 {\scriptsize

 }
\end{document}